\theoremstyle{plain}
\newtheorem{thm}{Theorem}[section]
\newtheorem{lem}[thm]{Lemma}
\newtheorem{prop}[thm]{Proposition}
\newtheorem{coro}[thm]{Corollary}
\newtheorem{defi}[thm]{Definition}
\newtheorem{rem}[thm]{Remark}
\numberwithin{equation}{section}
\newcommand{\cF}{\mathcal{F}}
\newcommand{\cb}{\mathtt{b}}
\newcommand{\cB}{\mathcal{B}}
\newcommand{\tcB}{\widetilde{\mathcal{B}}}
\newcommand{\tcb}{\widetilde{\mathtt{b}}}
\newcommand{\cc}{\mathtt{c}}
\newcommand{\cC}{\mathcal{C}}
\newcommand{\tcC}{\widetilde{\mathcal{C}}}
\newcommand{\tcc}{\widetilde{\mathtt{c}}}
\newcommand{\cA}{\mathcal{A}}
\newcommand{\cD}{\mathcal{D}}
\newcommand{\cM}{\mathcal{M}}
\newcommand{\cX}{\mathcal{X}}
\newcommand{\cY}{\mathcal{Y}}
\newcommand{\A}{\mathcal{A}_q(n,\ell,m)}
\newcommand{\tT}{\widetilde{T}}
\newcommand{\tK}{\widetilde{K}}
\newcommand{\tE}{\widetilde{E}}
\newcommand{\tQ}{\widetilde{Q}}
\newcommand{\tU}{\widetilde{U}}
\newcommand{\CC}{\mathbb{C}}
\newcommand{\II}{\mathbb{I}}
\newcommand{\JJ}{\mathbb{J}}
\newcommand{\NN}{\mathbb{N}}
\newcommand{\FF}{\mathbb{F}}
\newcommand{\bx}{\mathbf{x}}
\newcommand{\by}{\mathbf{y}}
\newcommand{\qbinom}[2]{\left[\begin{array}{c} #1\\ #2\end{array} \right]}
\newcommand{\vx}{\hat{\mathbf{x}}}
\newcommand{\vy}{\hat{\mathbf{y}}}
\newcommand{\po}{\preceq}
\newcommand{\poo}{\preceq_{(\alpha,\beta)}}
\def\brad{0.15} 
\def\srad{0.08} 
\numberwithin{equation}{section}
\title{\bf Bivariate $P$- and $Q$-polynomial structures of the association schemes based on attenuated spaces }
\renewcommand*{\Affilfont}{\normalsize\small}
\author[1]{Pierre-Antoine Bernard}
\author[2]{Nicolas Cramp\'e}
\author[3]{Luc Vinet}
\author[4]{Meri Zaimi}
\author[5]{Xiaohong Zhang\vspace{.5em}}
\affil[1,3,4,5]{Centre de Recherches Math\'ematiques, Universit\'e de Montr\'eal, \newline\vspace{.9em}
P.O. Box 6128, Centre-ville Station, Montr\'eal (Qu\'ebec), H3C 3J7, Canada.}
\affil[2]{Institut Denis-Poisson CNRS/UMR 7013 - Universit\'e de Tours - Universit\'e d'Orl\'eans, \newline\vspace{.9em}
Parc de Grandmont, 37200 Tours, France.}
\affil[2]
{ Laboratoire d'Annecy-le-Vieux de Physique Th\'eorique LAPTh, Universit\'e Savoie Mont Blanc,\newline\vspace{.9em}
 CNRS, F-74000 Annecy,
 France.}
\affil[3]{IVADO, Montr\'eal (Qu\'ebec), H2S 3H1, Canada. \vspace{.9em}}
	\renewcommand\AB@affilsepx{: \protect\Affilfont}
	\affil[ ]{E-mail addresses}
	\renewcommand\AB@affilsepx{, \protect\Affilfont}
	\affil[1]{pierre-antoine.bernard@umontreal.ca}
	\affil[2]{crampe1977@gmail.com}
	\affil[3]{luc.vinet@umontreal.ca}\affil[4]{meri.zaimi@umontreal.ca}\affil[5]{xiaohong.zhang@umontreal.ca}
\begin{document}

\date{\today} 
\maketitle
\vspace{15mm}
\begin{center}
\textbf{Abstract}\vspace{5mm}\\
\begin{minipage}{13cm}
The bivariate $P$- and $Q$-polynomial structures of association schemes based on attenuated spaces are examined using recurrence and difference relations of the bivariate polynomials which form the eigenvalues of the scheme. These bispectral properties are obtained from contiguity relations of univariate dual $q$-Hahn and affine $q$-Krawtchouk polynomials. The bispectral algebra associated to the bivariate polynomials is investigated, as well as the subconstituent algebra of the schemes. The properties of the schemes are compared to those of the non-binary Johnson schemes through a limit.
\end{minipage}
\end{center}

\medskip

\begin{center}
\begin{minipage}{13cm}
\textbf{Keywords:} Association schemes; attenuated spaces; bivariate polynomials; bispectrality; bivariate $P$- and $Q$-polynomial; subconstituent algebra 

\textbf{MSC2020 database:} 05E30; 33D45; 33D50
\end{minipage}
\end{center}

\vspace{15mm}
\newpage

\section{Introduction}

An $N$-class symmetric association scheme on a set $X$ is a partition of 
$X\times X$ into binary relations $\{R_i\ | \ i = 0,1, \dots, N\}$, which satisfies certain properties. 
The adjacency matrix $A_i$ associated to $R_i$ is a 01-matrix of size $|X|\times |X|$, with rows and columns indexed by elements of $X$ and verifies $(A_i)_{a,b}=1$ if and only if $(a,b)\in R_i$. 
In terms of these adjacency matrices $A_0,\ldots, A_N$, these properties are formulated as
\begin{enumerate}
\item[(i)] $A_0 = \II$, where $\II$ is the $|X| \times |X|$  identity matrix;
    
\item[(ii)] $\sum_{i=0}^{N} A_i = \JJ$, where $\JJ$ is the $|X| \times |X|$ all-ones matrix;

\item[(iii)] $A_i^\top = A_i$ for $i = 0, 1, \ldots, N$, where $\top$ stands for the transposition;

\item[(iv)] The following relations hold: $A_i A_j = A_j A_i = \sum_{k=0}^{N} p_{ij}^k A_k$,
\end{enumerate}
where $p_{ij}^k$ are constants, called intersection numbers of the scheme. More details can be found in \cite{BI,God2,God}.

Over the years, this construction has demonstrated its potency in establishing connections with various fields in algebra, combinatorics, and the study of special functions. Pioneer works \cite{Del,BI} have shown that schemes whose intersection numbers verify a metric condition are in correspondence with distance-regular graphs \cite{BCN} and are associated to orthogonal polynomials. Such schemes are said to be \textit{$P$-polynomial}. Similarly, schemes whose Krein parameters verify a similar metric condition are said to be \textit{$Q$-polynomial}. A noteworthy advancement was realized by Leonard \cite{Leo} who demonstrated that association schemes satisfying both $P$- and $Q$-polynomial properties could be described by the families of hypergeometric orthogonal polynomials of the terminating branch of the Askey scheme \cite{Koek}. Hence, $P$- and $Q$-polynomial association schemes such as the Hamming scheme or the Johnson scheme are characterized respectively by Krawtchouk and dual Hahn polynomials. These polynomials satisfy both a three-term recurrence relation and a three-term difference relation, and are solutions to a bispectral problem. Leonard's theorem has also been understood algebraically through the introduction and classification of Leonard pairs \cite{Ter,TerClas,TerNote}.
In this algebraic interpretation, the two bispectral operators (that is, the recurrence and difference operators) associated to a family of orthogonal polynomials of the Askey scheme, as well as the two generators of the subconstituent (or Terwilliger) algebra \cite{ter1,ter2,ter3} of a $P$- and $Q$-polynomial association scheme, are represented by a Leonard pair on certain (finite-dimensional) irreducible modules.  

As a mean to extend these correspondences, recent investigations have considered the connection between symmetric association schemes, multivariate polynomials \cite{BCPVZ,BKZZ} and $m$-distance-regular graphs \cite{BCVZZ}. This led to the definition of bivariate $P$- and $Q$- polynomial association schemes of type $(\alpha,\beta)$ \cite{BCPVZ} and multivariate $P$- and $Q$- polynomial association schemes \cite{BKZZ}. Many schemes were shown to fit these definitions, such as the  direct product of $P$- or $Q$- polynomial association schemes,  the symmetrization of an association scheme of certain length (also called a generalized Hamming scheme), the non-binary Johnson scheme and schemes based on attenuated or isotropic subspaces. The bivariate $P$- and $Q$-polynomial structure of type $(\alpha,\beta)$ of the non-binary Johnson scheme was investigated in details in \cite{CVZZ}. The $P$- and $Q$-polynomial structure in the sense of \cite{BKZZ} of the non-binary Johnson scheme and those based on attenuated spaces was examined in \cite{BKZZ2}, where the Krein parameters of the scheme were obtained from a formula based on convolutions of the dual eigenvalues. 

This paper further examines the association schemes based on attenuated spaces and puts forward an alternative approach to deriving its bivariate polynomial structures based on the recurrence and difference relations satisfied by the eigenvalues of the scheme. This approach is similar to the one adopted in \cite{CVZZ} where the non-binary Johnson scheme is treated, but the techniques in the present paper benefit from recent developments on bivariate polynomials which can be expressed in terms of univariate polynomials satisfying recurrence, difference and contiguity relations \cite{CFR,CFGRVZ,CFGRVZ2,CZ}. The work presented in \cite{CZ} is particularly relevant, as it studies a class of rank 2 Leonard pairs \cite{IT}, called factorized $A_2$-Leonard pairs, which appear in the context of association schemes with bivariate polynomial structures of certain type. In terms of the notions introduced in \cite{BCPVZ} and further examined in \cite{BCVZZ}, these association schemes involve recurrence relations of type $(\alpha,\beta)$ with either $\alpha=0$ or $\beta=0$, implying that one of the recurrence relations has three terms. The non-binary Johnson scheme and association schemes based on attenuated spaces both are of these types and are related to factorized $A_2$-Leonard pairs. Another purpose of this paper is to explore the algebras which arise from the association schemes based on attenuated spaces, that is the bispectral algebra realized by the recurrence and difference operators of the associated bivariate polynomials, and the subconstituent algebra of the scheme. Finally, we also consider in this paper the limit $q \rightarrow 1$ and discuss the relation between schemes based on attenuated spaces and the non-binary and binary Johnson schemes.

The paper is organized as follows. Section \ref{sec:attenuatedschemes} recalls the definition of association schemes based on attenuated spaces and some of their properties, such as the eigenvalues and dual eigenvalues. In Section \ref{sec:bispectral}, the bispectral properties (that is the recurrence relations and difference equations) of the eigenvalues are obtained, and the associated bispectral algebra is discussed. The relevant definitions concerning bivariate $P$- and $Q$-polynomial association schemes are presented in Section \ref{sec:bivariatePQpoly}. The bivariate $P$-polynomial structure for association schemes based on attenuated spaces is determined in Section \ref{sec:biPpoly}, and similarly the bivariate $Q$-polynomial structure is determined in Section \ref{sec:biQpoly}. The subconstituent algebra of the schemes is discussed in Section \ref{sec:subconstituent}. The $q\to 1$ limit and connections to the non-binary Johnson scheme are the objects of Section \ref{sec:limit} and Section \ref{sec:outlook} consists of concluding remarks. Appendix \ref{app:notations} gathers some useful and standard notations concerning polynomials of ($q$-)hypergeometric type. Moreover, relevant recurrence, difference and contiguity relations of univariate polynomials are collected in Appendix \ref{app:relations}.

\section{Association schemes based on attenuated spaces}\label{sec:attenuatedschemes}
In this section, we recall the main properties of association schemes based on attenuated spaces. For more details, the reader is encouraged to consult \cite{Kur,ter90poset,WGL,Wen16}. 

Let $\FF_q^{n+\ell}$ be the vector space of dimension $n+\ell$ over the finite field $\FF_q$, with $n$ a positive integer, $\ell$ a non-negative integer and $q$ a prime power, and consider a fixed subspace $\mathbf{w} \subset \FF_q^{n+\ell}$ of dimension $\ell$. The collection of all subspaces $\mathbf{x}\subseteq \FF_q^{n+\ell}$ such that $\mathbf{x}\cap \mathbf{w}=0$ is the attenuated space associated to $\mathbf{w}$. 
Let $m$ be a non-negative integer ($0\leq m \leq n$) and $X$ be the collection of all $m$-dimensional subspaces in the attenuated space associated to $\mathbf{w}$. 
It is shown in \cite{WGL} that the set $X$ together with the relations 
\begin{equation}
R_{ij} = \{ (\mathbf{x},\mathbf{y}) \in X \times X \ | \ \dim((\mathbf{x}+\mathbf{w})/\mathbf{w} \cap (\mathbf{y}+\mathbf{w})/\mathbf{w} ) = m - i \ \text{and} \ \dim(\mathbf{x} \cap \mathbf{y}) = m-i-j \}
\end{equation}
defines a symmetric association scheme that we will denote by $\A$. The non-empty relations $R_{ij}$ are those for which the pairs $(i,j)$ belong to the set
\begin{equation}\label{eq:domain}
\cD = \{ (a,b) \in \NN^2 \ | \ a+b \leq m, \ a \leq n-m, \ b\leq \ell   \} \,,
\end{equation}
where $\mathbb{N}^2$ denotes the set of pairs of non-negative integers.
The cardinality of the vertex set $X$ of the scheme $\A$ is given by 
\begin{equation}\label{eq:cardinality}
|X|=q^{m\ell}\qbinom{n}{m},
\end{equation}
where $\qbinom{n}{m}$ is the $q$-binomial coefficient (also called Gaussian coefficient, see Appendix \ref{app:notations}).  

The adjacency matrices $A_{ij}$ for $(i,j)\in \cD$ of the scheme are $|X|\times |X|$ matrices, with rows and columns indexed by elements of $X$ such that 
\begin{equation}
(A_{ij})_{\bx\by} = 
\begin{cases}
1 & \text{if } (\bx,\by) \in R_{ij}\,,\\
0 & \text{otherwise} \,.
\end{cases}
\end{equation}
It can be verified from the definition of the scheme that $A_{00}$ is the identity matrix. The Bose--Mesner algebra $\cM$ is defined as the matrix algebra over $\CC$ generated by $\{A_{ij}\}_{(i,j) \in \cD}$ under the usual matrix product. The adjacency matrices $A_{ij}$ form a basis of $\cM$ with  
\begin{equation}\label{eq:Amnij}
A_{mn}A_{ij} = \sum_{(a,b) \in \cD} p_{mn,ij}^{ab}A_{ab}\,,
\end{equation}
where $p_{mn,ij}^{ab}$ are the intersection numbers. As for any association scheme, there exists a set of primitive idempotents $E_{rs}$ labeled by pairs $(r,s)$ in a subset $\cD^\star \subset \NN^2$, with $|\cD^\star|=|\cD|$, $(0,0) \in \cD^\star$ and $E_{00}=\frac{1}{|X|}\JJ$, such that
\begin{align}
A_{ij} = \sum_{(r,s) \in \cD^\star} T_{ij}(r,s) E_{rs}\,, \quad E_{rs} = \frac{1}{|X|}\sum_{(i,j) \in \cD} U_{rs}(i,j) A_{ij}\,, \label{eq:eigenvalues}
\end{align}
where $T_{ij}(r,s)$ are the eigenvalues and $U_{rs}(i,j)$ are the dual eigenvalues of the scheme. Here we take $\cD^\star=\cD$, but for clarity we keep the notation $\cD^*$ as this subset may differ in general. The primitive idempotents $E_{rs}$ form another basis of the Bose--Mesner algebra $\cM$, and they satisfy
\begin{equation}
\sum_{(r,s)\in \cD^\star} E_{rs} = \II\,, \qquad E_{ij}E_{rs} = \delta_{ir}\delta_{js}E_{rs}\,.
\end{equation}
The eigenvalues and dual eigenvalues are related by the Wilson duality:
\begin{equation}\label{eq:WilsDual}
\frac{U_{rs}(i,j)}{U_{rs}(0,0)} = \frac{T_{ij}(r,s)}{T_{ij}(0,0)}\,.
\end{equation}
The algebra $\cM$ is in fact also closed under the Schur product $\circ$ (or entrywise product), therefore there exist scalars $q_{mn,ij}^{ab}$ such that 
\begin{equation}\label{eq:Emnrs}
E_{mn} \circ E_{rs} = \frac{1}{|X|} \sum_{(a,b) \in \cD^\star} q_{mn,rs}^{ab} E_{ab} \,.
\end{equation}
The numbers $q_{mn,rs}^{ab}$ are known as the Krein parameters of the scheme. The adjacency matrices $A_{ij}$ are Schur idempotents since
\begin{equation}
\sum_{(i,j)\in \cD} A_{ij} = \JJ\,, \qquad A_{ij}\circ A_{rs} = \delta_{ir}\delta_{js}A_{rs}\,.
\end{equation}


Tarnanen, Aaltonen, and Goethals developed a method to prove a given collection $\cA=\{A_0=I,A_1, \ldots, A_d\}$ of symmetric 01-matrices on a set $X$ form an association scheme \cite{TAG}. It says if there exists a collection of matrices $\{C_0=J, C_1,\ldots, C_d\}$ lying in the complex vector space spanned by $\cA$, such that 
$A_i=\sum_{j=0}^d \alpha_i(j) C_j$ and 
$C_rC_s=\sum_{t=0}^{\min\{r,s\}}\beta_{r,s}(t) C_t$ for some scalars $\alpha_i(j)$ and $\beta_{r,s}(t)$, 
then $\cA$ forms an association scheme and expressions of the eigenvalues of the scheme in terms of $\alpha_i(j)$ and $\beta_{r,s}(t)$ are provided. They further provide a set of matrices $C_0=J,\ldots, C_d$ satisfying the above conditions for the non-binary Johnson schemes and obtain the eigenvalues of the scheme. 
Kurihara \cite{Kur} used a different realization of $\A$ rather than the one based on attenuated spaces. In fact, let $V$ and $E$ be $n$- and $\ell$-dimensional vector spaces over $\FF_q$, respectively. Consider the set $X_m=\{(U,\xi)\, | \, U\subseteq V, \dim(U)=m \text{ and } \xi\in L(V,U)\}$, where $L(V,U)$ denotes the set of all linear maps from $V$ to $U$. By use of the binary relations of the Grassmann scheme $J_q(n,m)$ on $V$ and the binary relations of the bilinear form schemes, Kurihara defines a binary relation on $X_m$ and obtains a different realization of $\A$.   
By using a similar approach as in \cite{TAG}, constructing the matrices $C_j$ and calculating the scalars $\alpha_i(j)$ and $\beta_{r,s}(t)$,  
 all the eigenvalues of $\A$ are obtained, expressed in terms of the $q$-Eberlein polynomials and $q$-Krawtchouk polynomials, in correspondence to the Grassmann scheme and bilinear form scheme realization.

Indeed, let us first define the following univariate polynomials for $0\leq k \leq \min(N,\ell)$
\begin{align}
 &K_k(N,\ell;q;x)= (q^{-\ell};q)_k \qbinom{N}{k} q^{\ell k}  K_k^{aff}(q^{-x};q^{-\ell-1},N;q)\,, \label{eq:affKrawscheme}
\end{align}
and for $0 \leq k \leq \min(N-m,m)$
\begin{align}
 &E_k(N,m;q;x)=  q^{k^2}\qbinom{m}{k} \qbinom{N-m}{k} R_k(\mu(x);q^{-m-1},q^{-N+m-1},N-m|q)\,, \label{eq:dHahnscheme} \\
 &Q_k(N,m;q;x) = \left( \qbinom{N}{k} - \qbinom{N}{k-1} \right) R_x(\mu(k);q^{-m-1},q^{-N+m-1},N-m|q)\,, \label{eq:Hahnscheme}
\end{align}
where (using the standard notations of \cite{Koek}, which are recalled in the Appendix \ref{app:notations}) $K_k^{aff}(q^{-x})$ is the affine $q$-Krawtchouk polynomial, $R_k(\mu(x))$ is the dual $q$-Hahn polynomial and $\mu(x)= q^{-x}+q^{x-N-1}$. Note that $R_x(\mu(k))$ corresponds to the $q$-Hahn polynomial. For $k$ outside the domain, we will use the convention that the polynomials $K_k$, $E_k$ and $Q_k$ vanish. With these notations, the eigenvalues and dual eigenvalues of the association schemes $\A$ are given by\footnote{These correspond to $P_{ij}(r+s,s)$ and $U_{r+s,s}(i,j)$ in the notations of \cite{Kur}.}, for $(i,j)\in \cD$ and $(r,s)\in \cD^\star$,
\begin{align}
&T_{ij}(r,s)= q^{i\ell} K_j(m-i,\ell;q;s) E_i(n-s,m-s;q;r)\,, \label{eq:Tij} \\
&U_{rs}(i,j)=\frac{\qbinom{n}{m}}{\qbinom{n-s}{m-s}} K_s(m-i,\ell;q;j) Q_r(n-s,m-s;q;i)\,. \label{eq:Urs}
\end{align}

Up to factors, the bivariate polynomials \eqref{eq:Tij} and \eqref{eq:Urs} both have the structure of an intricate product of a pair of polynomials of the $q$-Askey scheme, where the parameters of one depend in a specific way on the degree or the variable of the other. Indeed, in terms of basic hypergeometric functions, one can write    
\begin{align}
&T_{ij}(x,y) \propto \qbinom{m-y}{i}
{}_3\phi_2 \Biggl({{q^{-j},\; q^{-y},\;0}\atop
{q^{-\ell},\; q^{-m+i}}}\;\Bigg\vert \; q,q\Biggr)
{}_3\phi_2 \Biggl({{q^{-i},\;q^{-x}}, \; q^{x+y-n-1} \atop
{q^{-n+m},\; q^{-m+y} }}\;\Bigg\vert \; q,q\Biggr),\\
&U_{ij}(x,y) \propto \qbinom{m-x}{j}{}_3\phi_2 \Biggl({{q^{-i},\;q^{-x}}, \; q^{i+j-n-1} \atop
{q^{-n+m},\; q^{-m+j} }}\;\Bigg\vert \; q,q\Biggr){}_3\phi_2 \Biggl({{q^{-j},\; q^{-y},\;0}\atop
{q^{-\ell},\; q^{-m+x}}}\;\Bigg\vert \; q,q\Biggr),
\end{align}
where $\propto$ refers to proportionality up to a function $f(i,j)$ which does not depend on $x,y$. Such bivariate polynomials are said to be of Tratnik type in reference to \cite{Tra}. They  are shown in \cite{CZ} to appear as the entries of change of basis matrices in the study of factorized $A_2$-Leonard pairs, which form a special class of rank 2 Leonard pairs \cite{IT}. The most general bivariate polynomials obtained in \cite{CZ} involve a product of $q$-Hahn and dual $q$-Hahn polynomials (see equation (7.11) in \cite{CZ}). These bivariate polynomials are shown to satisfy two recurrence relations and two difference equations; in both cases (recurrence or difference), one of the equations contains three terms and the other contains at most seven terms (which are related to the root system of the Weyl group $A_2$). The eigenvalues $T_{ij}(r,s)$ and dual eigenvalues $U_{rs}(i,j)$ belong to the framework studied in \cite{CZ}: they both correspond to one of the limit cases given in that paper. This means that they satisfy bispectral properties which are compatible with the bivariate polynomial structures of association schemes. In the rest of this paper, we will obtain these properties explicitly and directly from the expressions \eqref{eq:Tij}--\eqref{eq:Urs}. The bispectral properties will then allow to identify the bivariate polynomial structures of the association scheme. 

Note that when $\ell=0$, the scheme $\A$ is isomorphic to the Grassmann scheme, and when $m=n$, it is isomorphic to the bilinear forms scheme, both of which are known to be both (univariate) $P$- and $Q$-polynomial. 
In these two cases, all the points in the domain $\cD$ in \eqref{eq:domain} lie on a horizontal or a vertical line, respectively. 
It follows that exactly one of the two sets of eigenvalues $\{T_{i0}\}_{i=0}^{\min(m,n-m)}$ or $\{T_{0j}\}_{j=0}^{\min(m,\ell)}$ is defined, and similarly for the dual eigenvalues. These eigenvalues and dual eigenvalues satisfy a three-term recurrence relation. 
Here we focus on the case when $m>n$ and $\ell>0$.

\section{Bispectral properties of the eigenvalues} \label{sec:bispectral}

In this section, we compute recurrence relations and difference equations for the bivariate polynomials $T_{ij}(r,s)$. These relations are referred to as the bispectral properties of $T_{ij}(r,s)$. 

\subsection{Recurrence relations}
We start by obtaining equations which relate polynomials $T_{ij}(r,s)$ with different degrees $(i,j)$.
\begin{prop}\label{prop:recT}
Let $\lambda(s)=q^{-s}-1, \, \Lambda(r,s)=(q^{-r}-1)(1-q^{r+s-n-1})$, then the polynomials $T_{ij}(r,s)$ satisfy the following recurrence relations for all $(i,j)\in\cD$ and $(r,s) \in \cD^\star$
\begin{subequations}
\begin{align}\label{eq:recT1}
 \lambda(s) T_{ij}(r,s)=& 
 \sum_{\epsilon\in \{0,+1,-1\} }  \cb^{\epsilon}(i,j)\ T_{ij+\epsilon}(r,s) \,,
\end{align}
where 
\begin{align}
& \cb^{+}(i,j) = q^{i+j-\ell-m}(q^{j+1}-1)\, ,\\
&\cb^{-}(i,j)=(q^{j-\ell-1}-1)(q^{i+j-m-1}-1)\, ,\\
& \cb^{0}(i,j)=
-\cb^{+}(i,j-1)-\cb^{-}(i,j+1)\,,
\end{align}
\end{subequations}
and 
\begin{subequations}
\begin{align}\label{eq:recT2}
 q^{-s}\Lambda(r,s) T_{ij}(r,s)=& \sum_{\epsilon,\epsilon' \in \{0,+1,-1\}}  \cc^{\epsilon,\epsilon'}(i,j)\ T_{i+\epsilon,j+\epsilon'}(r,s)  \,,
\end{align}
where
\begin{align}
& \cc^{+0}(i,j) = q^{j-\ell-n-1}(1-q^{i+1})^2\, ,\\
& \cc^{+-}(i,j) = q^{-n-1}(1-q^{i+1})^2 (1-q^{j-\ell-1})\, , \\
& \cc^{0+}(i,j)=q^{i+j-\ell-m}(q^{i-n+m}-1) (q^{j+1}-1)\, , \\
& \cc^{00}(i,j)=-(1-q^{i-n+m})(1-q^{i-m})-q^{-n-1}(1-q^i)^2-(1-q^{i-n+m})\cb^0(i,j)\, , \\
& \cc^{0-}(i,j) =-(1-q^{i-n+m})(1-q^{j-\ell-1})(1-q^{i+j-m-1})\, , \\
& \cc^{-+}(i,j)= q^{i-m-1}(1-q^{i-1-n+m})(q^{j+1}-1)\, , \\
&\cc^{-0}(i,j) = (1-q^{i-1-n+m})(1-q^{j-m+i-1})\,,\\
&\cc^{++}(i,j)=\cc^{--}(i,j)=0\,.
\end{align}
\end{subequations}
\end{prop}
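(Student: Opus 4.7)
The plan is to exploit the factorized product structure
\begin{equation*}
T_{ij}(r,s) = q^{i\ell}\, K_j(m-i,\ell;q;s)\, E_i(n-s, m-s;q;r)
\end{equation*}
from \eqref{eq:Tij}, together with the univariate recurrence, difference and contiguity relations for the affine $q$-Krawtchouk and dual $q$-Hahn polynomials collected in Appendix~\ref{app:relations}. Since all the $j$-dependence sits in $K_j$ and the $i$-dependence appears in both $E_i$ and in the parameter $m-i$ of $K_j$, the first recurrence \eqref{eq:recT1} follows from a single identity on the affine $q$-Krawtchouk factor, while the second recurrence \eqref{eq:recT2} will require combining the three-term recurrence for the dual $q$-Hahn factor with contiguity relations for the affine $q$-Krawtchouk factor.

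For \eqref{eq:recT1}, I would view $K_j(m-i,\ell;q;s)$ as a polynomial of degree $j$ in $q^{-s}$ whose two parameters $m-i$ and $\ell$ do not depend on $j$. The three-term recurrence of the affine $q$-Krawtchouk polynomial in its degree, rewritten in terms of the normalization \eqref{eq:affKrawscheme}, yields an identity of the form
\begin{equation*}
q^{-s} K_j(m-i,\ell;q;s) = B^{+}(i,j)\, K_{j+1} + B^{0}(i,j)\, K_j + B^{-}(i,j)\, K_{j-1},
\end{equation*}
with the $B^\pm$ independent of $s$. Subtracting $K_j$ from both sides turns $q^{-s}$ into $\lambda(s)$ on the left, and multiplying throughout by the $j$-independent factor $q^{i\ell}E_i(n-s,m-s;q;r)$ converts each $K_{j+\epsilon}$ into $T_{i,j+\epsilon}$; matching coefficients gives the asserted $\cb^\pm$. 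The form of $\cb^{0}$ is then most conveniently recovered from the sum rule obtained by evaluating the identity at $s=0$, where $\lambda(0)=0$ and the $T_{ij}(r,0)$ are not identically zero.

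For \eqref{eq:recT2}, I would start from the three-term recurrence of the dual $q$-Hahn polynomial in its degree $i$. A short check shows that with $\gamma=q^{s-m-1}$, $\delta=q^{m-n-1}$ and $N_{dqH}=n-m$ one has exactly $\Lambda(r,s)=(q^{-r}-1)(1-\gamma\delta q^{r+1})$, which is the spectral combination of $\mu(r)=q^{-r}+q^{r+s-n-1}$ that appears on the left of the dual $q$-Hahn recurrence. This produces
\begin{equation*}
-\Lambda(r,s)\, E_i(n-s,m-s;q;r) = \tilde\alpha_i(s) E_{i+1} + \tilde\beta_i(s) E_i + \tilde\gamma_i(s) E_{i-1},
\end{equation*}
with coefficients that depend on $s$ through the parameters of the polynomial. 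Multiplying by $q^{i\ell}K_j(m-i,\ell;q;s)$ yields terms $K_j(m-i,\ell;q;s)E_{i\pm 1}(n-s,m-s;q;r)$, which are \emph{not} yet proportional to any $T_{i\pm 1,j'}(r,s)$: the latter require $K_{j'}(m-(i\pm 1),\ell;q;s)$ instead of $K_j(m-i,\ell;q;s)$. I would then apply the two-term contiguity relations for the affine $q$-Krawtchouk polynomial in its truncation parameter from Appendix~\ref{app:relations}, which express $K_j(N,\ell;q;s)$ as a linear combination of $K_j(N\mp 1,\ell;q;s)$ and $K_{j\mp 1}(N\mp 1,\ell;q;s)$. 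After substitution, the nine a priori possible shifts $(\epsilon,\epsilon')\in\{-1,0,+1\}^2$ collapse to seven because only one side of each contiguity relation is needed per term; this is precisely the origin of the vanishings $\cc^{++}=\cc^{--}=0$. As in the first recurrence, $\cc^{00}(i,j)$ is then determined by the sum rule obtained at $r=0$ where $\Lambda(0,s)=0$.

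The main obstacle is the bookkeeping in the second recurrence: one must verify that the $s$-dependent coefficients coming from the dual $q$-Hahn recurrence, and those coming from the affine $q$-Krawtchouk contiguity relations, combine into the single multiplicative factor $q^{-s}$ on the left-hand side and into $s$-independent coefficients $\cc^{\epsilon,\epsilon'}(i,j)$ on the right. This cancellation is exactly the bispectral compatibility anticipated after \eqref{eq:Urs}; once the correct contiguity relations from Appendix~\ref{app:relations} are chosen, it reduces to elementary, if somewhat lengthy, manipulations of $q$-shifted factorials, and the stated closed forms of the $\cc^{\epsilon,\epsilon'}$ then follow by direct simplification.
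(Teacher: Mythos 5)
Your proposal matches the paper's proof in all essentials: the first recurrence comes from the degree recurrence of the affine $q$-Krawtchouk factor alone, and the second from the dual $q$-Hahn three-term recurrence on $E_i$ followed by the contiguity relations of $K_j$ in its truncation parameter $m-i$ (Appendix \ref{app:relations}), with the vanishing of $\cc^{++}$ and $\cc^{--}$ explained exactly as in the paper by the two-term nature of those contiguity relations. The only cosmetic difference is that you propose fixing $\cb^{0}$ and $\cc^{00}$ via sum rules at $s=0$ and $r=0$ rather than reading them off directly, which is a valid shortcut but not a different method.
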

\begin{proof}
Write the eigenvalues $T_{ij}(r,s)$ as in \eqref{eq:Tij}. Note that for $i+s>m$, we have $E_i(n-s,m-s,q,r)=0$ and hence also $T_{ij}(r,s)=0$. Equation~\eqref{eq:recT1} follows from applying the recurrence relation \eqref{eq:crecK} with $\epsilon=0$ to the polynomial $K_j(m-i,\ell;q;s)$. 

To prove \eqref{eq:recT2}, use the recurrence relation \eqref{eq:recE} on the polynomial $E_i(n-s,m-s,q,r)$ to find
\begin{align}\label{eq:recT21}
    &q^{-s}\Lambda(r,s)T_{ij}(r,s)
    =q^{i\ell}(1-q^{i+1})^2q^{-n-1}K_j(m-i,\ell; q;s)E_{i+1}(n-s,m-s;q;r)\nonumber \\
    &\qquad \qquad \qquad -q^{i\ell}\big((1-q^i)^2q^{-n-1}+(1-q^{i-n+m})(1-q^{i-m+s})q^{-s}\big)K_j(m-i,\ell; q;s)E_{i}(n-s,m-s;q;r)\nonumber \\
    &\qquad \qquad \qquad +q^{i\ell}(1-q^{i-1-n+m})(q^{-s}-q^{i-m-1})K_j(m-i,\ell; q;s)E_{i-1}(n-s,m-s;q;r)\, . 
\end{align}
The contiguity recurrence relation \eqref{eq:crecK} with $\epsilon=-$ can be used to rewrite the first term in the R.H.S. of \eqref{eq:recT21} in terms of polynomials $K_j$ with shifted parameter $i \to i+1$. Similarly, the contiguity difference relation \eqref{eq:crecK} with $\epsilon=+$ can be used to rewrite the third term in the R.H.S. of \eqref{eq:recT21} in terms of polynomials $K_j$ with shifted parameter $i \to i-1$. Finally, the recurrence relation \eqref{eq:crecK} with $\epsilon=0$ can be used on the second term in \eqref{eq:recT21}. Equation \eqref{eq:recT2} is then obtained by identifying the polynomials $T_{i+\epsilon,j+\epsilon'}(r,s)$ with the help of \eqref{eq:Tij}.   
\end{proof}

\subsection{Difference equations}

We now obtain equations which relate polynomials $T_{ij}(r,s)$ at different values of the variables $(r,s)$.
\begin{prop}\label{prop:diffT}
Let $\theta_i=q^{-i}-1$, then the polynomials $T_{ij}(r,s)$ satisfy the following difference equations for all $(i,j) \in \cD$ and $(r,s) \in \cD^\star$
\begin{subequations}
\begin{align}\label{eq:diffT1}
 \theta_i T_{ij}(r,s)=& \sum_{\epsilon\in \{0,+1,-1\} }  \cB^{\epsilon}(r,s)\ T_{ij}(r+\epsilon,s) \,,
\end{align}
where 
\begin{align}
& \cB^{+}(r,s)=\frac{(1-q^{r+m-n})(1-q^{r+s-m})(1-q^{r+s-n-1})}{(1-q^{2r+s-n-1})(1-q^{2r+s-n})}\,, \\
& \cB^{-}(r,s)=-\frac{q^{r+s-n-1}(1-q^{r})(1-q^{r+s-m-1})(1-q^{r+m-n-1})}{(1-q^{2r+s-n-2})(1-q^{2r+s-n-1})}\,, \\
& \cB^{0}(r,s)=-\cB^{+}(r,s)-\cB^{-}(r,s)\,,
\end{align}
\end{subequations}
and 
\begin{subequations}
\begin{align}\label{eq:diffT2}
 q^{-i}\theta_j T_{ij}(r,s)=& \sum_{\epsilon,\epsilon' \in \{0,+1,-1\}}  \cC^{\epsilon,\epsilon'}(r,s)\ T_{ij}(r+\epsilon,s+\epsilon') \,,
\end{align}
where 
\allowdisplaybreaks
\begin{align}
& \cC^{+0}(r,s)= -(1-q^{s-\ell}) \cB^+(r,s)    \,, \\
& \cC^{-0}(r,s)= -(1-q^{s-\ell}) \cB^-(r,s)    \,, \\
& \cC^{00}(r,s)= -(1-q^{s-\ell}) \cB^0(r,s)- (1-q^{s-\ell})(1-q^{s-m}) +q^{s-\ell-m-1}(1-q^{s})    \,, \\
& \cC^{-+}(r,s)=-\frac{q^{s-m}(1-q^{s-\ell})(1-q^r)(1-q^{r+m-n-1})}{1-q^{2r+s-n-1}}   \,, \\
& \cC^{0+}(r,s)=\frac{(1-q^{s-\ell})(1-q^{r+s-n-1})(1-q^{r+s-m})}{1-q^{2r+s-n-1}}   \,, \\
& \cC^{+-}(r,s)=-\frac{q^{s-\ell-m-1}(1-q^{s})(1-q^{r+m-n})}{1-q^{2r+s-n-1}}   \,, \\
& \cC^{0-}(r,s)=-\frac{q^{r+s-\ell-n-1}(1-q^{s})(1-q^{r+s-m-1})}{1-q^{2r+s-n-1}}   \,, \\
& \cC^{++}(r,s) = \cC^{--}(r,s)=0 \,.
\end{align}
\end{subequations}
\end{prop}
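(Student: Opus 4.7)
The plan is to mirror the proof of Proposition \ref{prop:recT}, exploiting the fact that \eqref{eq:Tij} factorizes $T_{ij}(r,s)$ as the prefactor $q^{i\ell}$ times an affine $q$-Krawtchouk polynomial $K_j(m-i,\ell;q;s)$ depending only on $s$ (and on $i$ through its parameter $m-i$), and a dual $q$-Hahn polynomial $E_i(n-s,m-s;q;r)$ depending on $r$ and on $s$ (through its parameters $n-s,m-s$). Crucially, $K_j$ is independent of $r$ and $E_i$ is independent of $j$. The only input needed is the bispectrality of the two univariate factors, i.e.\ their three-term difference equations and their contiguity relations shifting their parameters, as collected in Appendix \ref{app:relations}. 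The current proposition is the formal ``dual'' of Proposition \ref{prop:recT}: there, one applied recurrence relations in the degree of $K_j$ or $E_i$ and used the dual contiguity relations to repair the parameters; here, one applies difference relations in the variables $s$ or $r$ and repairs the parameters with the contiguity relations proper.

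For equation \eqref{eq:diffT1}, since $K_j(m-i,\ell;q;s)$ does not involve $r$, I would factor it out and apply the three-term difference equation of the dual $q$-Hahn polynomial $E_i(n-s,m-s;q;r)$ in the variable $r$. The coefficients produced are precisely the standard three-term difference coefficients of the dual $q$-Hahn polynomial evaluated at the parameters $(n-s,m-s)$, which, after simplification, are the $\cB^{+}(r,s)$, $\cB^{-}(r,s)$ and $\cB^{0}(r,s)$ in the statement.

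For equation \eqref{eq:diffT2}, I would start by applying the three-term difference equation of $K_j(m-i,\ell;q;s)$ in the variable $s$, producing a combination of $K_j(m-i,\ell;q;s+\epsilon')$ for $\epsilon'\in\{-1,0,+1\}$ multiplied by $E_i(n-s,m-s;q;r)$. Since to assemble a genuine $T_{ij}(r,s+\epsilon')$ the parameters $(n-s,m-s)$ of $E_i$ must become $(n-s-\epsilon',m-s-\epsilon')$, I would rewrite $E_i(n-s,m-s;q;r)$ in terms of $E_i$ with shifted parameters using the contiguity relations of the dual $q$-Hahn polynomial: the $\epsilon'=+1$ term is processed with the contiguity that shifts the parameter by $+1$ in $s$, the $\epsilon'=-1$ term with the one that shifts it by $-1$, and the $\epsilon'=0$ term by the plain three-term difference relation in $r$ (to produce terms with $r+\epsilon$). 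Identifying each resulting product as a $T_{ij}(r+\epsilon,s+\epsilon')$ via \eqref{eq:Tij} and collecting the coefficients would yield \eqref{eq:diffT2}.

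The main obstacle is the algebraic bookkeeping: each contiguity relation introduces rational prefactors in $q^{r},q^{s}$ with denominators of the form $1-q^{2r+s-n-\ast}$, and these have to combine cleanly into the compact coefficients $\cC^{\epsilon\epsilon'}(r,s)$ of the statement. In particular, the vanishing $\cC^{++}=\cC^{--}=0$ should follow because the contiguity that shifts the parameter of $E_i$ by $\pm 1$ does not produce a term with $r$ shifted by $\pm 1$ in the same direction; and the ``diagonal'' coefficient $\cC^{00}$ should arise from the sum of the three $\epsilon'=0$ contributions — the pure three-term difference in $r$ plus an extra diagonal term from the zeroth contiguity — exactly mirroring how $\cc^{00}$ and the vanishing $\cc^{++}=\cc^{--}=0$ were produced in the proof of Proposition \ref{prop:recT}.
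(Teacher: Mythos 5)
Your proposal is correct and follows essentially the same route as the paper: equation \eqref{eq:diffT1} is obtained by applying the three-term difference equation \eqref{eq:cdiffE} (with $\epsilon=0$) to the dual $q$-Hahn factor, and \eqref{eq:diffT2} by first applying the difference equation \eqref{eq:diffK} to $K_j$ in $s$ and then repairing the parameters of $E_i$ with the contiguity difference relations \eqref{eq:diffE} ($\epsilon=\mp$ for the $s\pm1$ terms, $\epsilon=0$ for the unshifted one). Your explanation of why $\cC^{++}=\cC^{--}=0$ (the vanishing of $B^{+-}$ and $B^{-+}$ in the contiguity relations) likewise matches what happens in the paper's computation.
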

\begin{proof}
Equation \eqref{eq:diffT1} is directly proven by writing the eigenvalues $T_{ij}(r,s)$ as in \eqref{eq:Tij} and using the difference equation \eqref{eq:cdiffE} with $\epsilon=0$ on the polynomials $E_i(n-s,m-s;q,r)$.

To prove \eqref{eq:diffT2}, one starts similarly by using \eqref{eq:Tij} and the difference equation \eqref{eq:diffK} on the polynomials $K_j(m-i,\ell;q;s)$:
\begin{align}
 q^{-i}\theta_j T_{ij}(r,s)= &q^{i\ell} q^{s-m}(q^{m-s-i}-1)(1-q^{s-\ell}) K_j(m-i,\ell;q;s+1)E_i(n-s,m-s;q;r) \nonumber \\
 + &q^{i\ell}\left(-(q^{-i}-q^{s-m})(1-q^{s-\ell})+q^{s-\ell-m-1}(1-q^{s})\right) K_j(m-i,\ell;q;s)E_i(n-s,m-s;q;r) \nonumber \\ 
 -& q^{i\ell}q^{s-\ell-m-1}(1-q^{s}) K_j(m-i,\ell;q;s-1)E_i(n-s,m-s;q;r)   \,. \label{eq:diffT21}
\end{align}
The contiguity difference relation \eqref{eq:diffE} with $\epsilon=-$ can be called upon to rewrite the first term in the R.H.S. of \eqref{eq:diffT21} in terms of polynomials $E_i$ with shifted parameter $s \to s+1$. Similarly, the contiguity difference relation \eqref{eq:diffE} with $\epsilon=+$ allows to rewrite the third term in the R.H.S. of \eqref{eq:diffT21} in terms of polynomials $E_i$ with shifted parameter $s \to s-1$. Finally, the difference relation \eqref{eq:diffE} with $\epsilon=0$ can be used on the second term in \eqref{eq:diffT21}. Equation \eqref{eq:diffT2} is then obtained by identifying the polynomials $T_{ij}(r+\epsilon,s+\epsilon')$ using \eqref{eq:Tij}.   
\end{proof}

\subsection{Bispectral algebra} \label{ssec:bispalg}

As just shown, the polynomials $T_{ij}(r,s)$ are bispectral. It is instructive
to study the associated algebra given that for the univariate Askey--Wilson polynomials this leads to the Askey--Wilson algebra \cite{Zhe,CFGDRV} and its special cases.

Let $W$ be the vector space spanned by $\{T_{ij}\}_{(i, j)\in\cD}$. Let $\cX^\star$ and $\cY^\star$ be the matrices representing the action of the difference operators on $W$
\begin{subequations}\label{eq:XYs}
    \begin{align}
   & \cX^\star T_{ij}(x, y)=q^{-i}\theta_j T_{ij}(x, y)\,,\\
      & \cY^\star T_{ij}(x, y)=\theta_i T_{ij}(x, y)\,,
\end{align}
\end{subequations}
 and $\cX$ and $\cY$ be the matrices representing the action of the recurrence operators
 \begin{subequations}\label{eq:XY}
    \begin{align}
   & \cX T_{ij}(x, y)=\sum_{\epsilon\in \{0,+1,-1\} }  \cb^{\epsilon}(i,j)\ T_{ij+\epsilon}(x,y)\,,\\
      & \cY T_{ij}(x, y)=\sum_{\epsilon,\epsilon' \in \{0,+1,-1\}}  \cc^{\epsilon,\epsilon'}(i,j)\ T_{i+\epsilon,j+\epsilon'}(x,y) \,.
\end{align}
\end{subequations}
The bispectral algebra of $T_{ij}(r,s)$ is defined by the generators $\cX$, $\cY$, $\cX^*$ and $\cY^*$ and their commutation relations, more conveniently presented in terms of the following affine transformed operators: 
\begin{align}\label{eq:transformations}
    X= \mathcal{I}+\cX\,,\quad Y=\frac{q^{n+1}}{(q-1)^2}\big( (1+q^{-n-1})\mathcal{I}+\cX+\cY\big)\,,\quad Y^\star=\mathcal{I}+\cY^\star\,,\quad X^\star=\frac{q^{\ell+1}}{(q-1)^2}\big(\mathcal{I}+\cY^\star+\cX^\star)\,,
\end{align}
where $\mathcal{I}$ is the identity matrix. For these generators, a straightforward computation yields the following commutation relations
\begin{equation}\label{eq:bial1}
    [X,Y]=0\,,\qquad [X^\star,Y^\star]=0\,,
\end{equation}
but also 
\begin{equation}\label{eq:bial2}
    [X,Y^\star]=0\,.
\end{equation}
The relation \eqref{eq:bial2} arises from the fact that the eigenvalues of $Y$ are independent of the label $j$, while the matrix $X$ preserves the remaining index $i$. The pair $X$ and $X^\star$ satisfies the defining relations of the Askey--Wilson algebra with $Y^\star$ as a central element,  
\begin{align}\label{eq:bial3}
   & \{X^2,X^\star\}-(q+q^{-1})XX^\star X=q^{-m-1}(q+1) \mathcal{I}-q^{-m-1}(q^{\ell+1}+1)X-Y^\star X\,,
\end{align}
and
\begin{align}\label{eq:bial4}
   & \{(X^\star)^2,X\}-(q+q^{-1})X^\star X X^\star=\frac{q^{\ell-m}(q+1)}{(q-1)^2}Y^\star -q^{-m-1}(q^{\ell+1}+1)X^\star-Y^\star X^\star\,.
\end{align}
Similarly, $Y$ and $Y^\star$ also satisfy the defining relations of the Askey--Wilson algebra with $X$ as a central element, 
\begin{align}\label{eq:bial5}
   \{Y^2,Y^\star\}-(q+q^{-1})YY^\star Y=& \frac{q+1}{(q-1)^2}\big( q^{n-m-1} \mathcal{I}+(q^{m-1}+2q^n)X  -q^{n-1}(q+1)XY^\star \big) \nonumber\\
 & -(2q^{-1}+q^{n-m})Y -q^m XY \,,
\end{align}
and
\begin{align}\label{eq:bial6}
   & \{(Y^\star)^2,Y\}-(q+q^{-1})Y^\star Y Y^\star= (1+q^{-1})\mathcal{I}-q^m XY^\star -(2q^{-1}+q^{n-m})Y^\star \,.
\end{align}

\section{Bivariate $P$- and $Q$-polynomial association schemes} \label{sec:bivariatePQpoly}

In this section, we recall the definitions of bivariate $P$- and $Q$-polynomial association schemes. The bivariate case was first defined in \cite{BCPVZ}, and a multivariate extension was proposed in \cite{BKZZ}. In this paper, we will use a refinement of the definitions of \cite{BKZZ} which was considered in \cite{BCVZZ}, focusing on the case of two variables. The use of a refinement is appropriate here since the association schemes based on attenuated spaces have a structure which can be described with more precision than the general characterization of \cite{BKZZ}, which encompasses more possibilities.

We start with the definition of a special total order on the set of monomials in $\mathbb{C}[x_1, x_2]$.
\begin{defi}\label{def:mo} A monomial order $\leq$ on $\mathbb{C}[x_1, x_2]$ is a relation on the set of
monomials $x_1^{n_1}x_2^{n_2}$ satisfying:
\begin{itemize}
\item[(i)] $\leq$ is a total order;
\item[(ii)] for monomials $u$, $v$, $w$, if $u \leq v$, then $wu \leq wv$;
\item[(iii)] $\leq$ is a well-ordering, i.e. any non-empty subset of the set of monomials has a minimum element under $\leq$.
\end{itemize}
\end{defi}
Each monomial $x_1^{n_1}x_2^{n_2}$ can be associated to the tuple  $(n_1,n_2) \in \mathbb{N}^2$. We shall use the same notation $\leq$ to order tuples in $\mathbb{N}^2$. A monomial order allows to define a generalized notion of degree.
\begin{defi}\label{def:multideg} For a monomial order $\leq$ on $\mathbb{N}^2$, a bivariate polynomial $v(x_1, x_2)$ is said to be of \textit{multidegree} $(n_1,n_2) \in \mathbb{N}^2$ if $v(x_1,x_2)$ is of the form 
\begin{equation}
v(x_1,x_2) = \sum_{\substack{(m_1,m_2) \leq (n_1,n_2)}} f_{m_1,m_2} x_1^{m_1} x_2^{m_2}\,, \qquad \text{with } f_{n_1,n_2} \neq 0\,.
\end{equation}
\end{defi}

To describe more precisely the bivariate polynomial structures of association schemes, it is useful to introduce a partial order $\po$ on $\NN^2$, in addition to a fixed monomial order $\leq$, with the following properties (see \cite{BCVZZ} for more details):
\begin{itemize}
\item[(i)] for all $a,b \in \NN^2$, if $a \po b$ then  $a  \leq b$;
\item[(ii)] for all $a,b,c \in \NN^2$, if $a \po b$ then $a + c \po b + c$;
\item[(iii)] for all $a \in \NN^2$, we have $(0,0) \po a$.
\end{itemize}
The previous properties mean that the partial order $\po$ satisfies properties similar to a monomial order, and is compatible with the fixed order $\leq$ in the sense of (i). We shall assume in the following that $(\leq,\po)$ is a pair of monomial order and partial order with the above properties.  
\begin{defi}\label{def:compa}
A bivariate polynomial $v_{n_1,n_2}(x_1,x_2)$ of multidegree $(n_1,n_2) \in \NN^2$ with respect to $\leq$ is called $\po$-compatible if all monomials $x_1^{m_1}x_2^{m_2}$ appearing in $v_{n_1,n_2}(x_1,x_2)$ satisfy $(m_1,m_2) \po (n_1,n_2)$.
\end{defi} 

Let $e_1=(1,0)$ and $e_2=(0,1)$. 
Using the ideas from \cite{BCPVZ}, and based on the definition of bivariate $P$-polynomial association scheme introduced in \cite{BKZZ}, we define 
bivariate $P$-polynomial association schemes with a refined structure, which generalize the notion of $P$-polynomial association scheme.  
\begin{defi}\label{def:biPpoly} Let $\mathcal{D} \subset \mathbb{N}^2$ contain $e_1, e_2$, and let $(\leq,\po)$ be a pair of monomial order and partial order on $\mathbb{N}^2$. A commutative association scheme $\mathcal{Z}$ is called $\po$-compatible bivariate $P$-polynomial on the domain $\mathcal{D}$ with respect to $\leq$ if the following three conditions are satisfied:
\begin{enumerate}
\item[(i)] If $n = (n_1, n_2) \in \mathcal{D}$  and $0 \leq n_i' \leq  n_i$ for $i=1,2$, then $n' = (n_1', n_2') \in \mathcal{D}$;
\item[(ii)]  There exists a relabeling of the adjacency matrices of $\mathcal{Z}$:
\begin{equation}
\mathcal{Z} = \{A_n \ | \ n \in \mathcal{D}\}
\end{equation}
such that for all $n \in \mathcal{D}$ we have
\begin{equation}\label{eq:polyA}
A_n = v_n(A_{10}, A_{01})
\end{equation}
where $v_n(x_1,x_2)$ is a $\po$-compatible bivariate polynomial of multidegree $n$ and all monomials $x_1^{a_1}x_2^{a_2}$ in $v_n(x_1,x_2)$ satisfy $(a_1,a_2) \in \mathcal{D}$;

\item[(iii)] For $i = 1,2$ and $a = (a_1, a_2) \in \mathcal{D}$, the product $A_{e_i}  A_{10}^{a_1}A_{01}^{a_2}$ is a
linear combination of
\begin{equation}
\{ A_{10}^{b_1}A_{01}^{b_2}\ | \ b = (b_1,b_2) \in \mathcal{D}, \ b \po a + e_i  \}\,.
\end{equation}
\end{enumerate}
\end{defi}
Such bivariate $P$-polynomial association schemes can be characterized in terms of conditions on their intersection numbers, as shown in \cite{BCPVZ,BKZZ,BCVZZ}.
\begin{prop}\label{prop:biPpoly2} Let $\mathcal{D}\subset \mathbb{N}^2$ such that $e_1, e_2\in \mathcal{D}$ and $\mathcal{Z} = \{A_n \ | \ n \in \mathcal{D}\}$ be a commutative association scheme. Then the following two statements are
equivalent:
\begin{enumerate}
\item[(i)] $\mathcal{Z}$ is a $\po$-compatible bivariate $P$-polynomial association scheme on $\mathcal{D}$ with respect to $\leq$;
\item[(ii)] The condition (i) of Definition \ref{def:biPpoly} holds for $\mathcal{D}$ and the intersection numbers satisfy,  
for each $i = 1,2$ and each $a \in \mathcal{D}$, $p_{e_i, a}^b \neq 0$ for $b \in \mathcal{D}$ implies $b \po a + e_i$. Moreover, if $a + e_i \in \mathcal{D}$, then $p_{e_i, a}^{a + e_i} \neq 0$ holds.
\end{enumerate}
\end{prop}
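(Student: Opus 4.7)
The plan is to prove the equivalence by induction along the partial order $\po$, in both directions, exploiting the triangular change of basis between $\{A_n\}_{n\in\cD}$ and the monomial basis $\{A_{10}^{a_1}A_{01}^{a_2}\}_{a\in\cD}$ of the Bose--Mesner algebra. Analogous equivalences have been established for related notions in \cite{BCPVZ,BKZZ,BCVZZ}, and the present proof adapts the same scheme to the $\po$-compatible refinement.

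For the direction (i) $\Rightarrow$ (ii), I start from $A_n=v_n(A_{10},A_{01})$ given by Definition \ref{def:biPpoly}. The $\po$-compatible, multidegree-$n$ structure of $v_n$ makes the corresponding coefficient matrix $\po$-triangular with nonzero diagonal, hence invertible, yielding the inverse expansion $A_{10}^{a_1}A_{01}^{a_2}=\sum_{b\po a,\,b\in\cD}g_{a,b}A_b$ with $g_{a,a}\neq 0$. Expanding $A_{e_i}A_a$ first in the monomial basis via $v_a$, then applying condition (iii) of Definition \ref{def:biPpoly} monomial-by-monomial to keep all resulting exponents $\po a+e_i$, and finally re-expanding in $\{A_c\}$ via the triangular inverse, one obtains $p_{e_i,a}^c\neq 0\Rightarrow c\po a+e_i$. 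For the nonvanishing of $p_{e_i,a}^{a+e_i}$ when $a+e_i\in\cD$, I track the coefficient of $A_{10}^{(a+e_i)_1}A_{01}^{(a+e_i)_2}$ in $A_{e_i}v_a(A_{10},A_{01})$: antisymmetry of $\po$ rules out any contribution from subleading terms of $v_a$, so this coefficient equals the nonzero leading coefficient of $v_a$, which converts via the triangular inverse into $p_{e_i,a}^{a+e_i}\neq 0$.

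For the converse (ii) $\Rightarrow$ (i), I construct the polynomials $v_n$ recursively along $\po$, starting with $v_{(0,0)}=1$, $v_{e_1}=x_1$, $v_{e_2}=x_2$. For $n\in\cD$ beyond the base, the coordinate-wise down-closure of $\cD$ supplies $i$ with $n-e_i\in\cD$, and the intersection number condition yields
\begin{equation*}
A_{e_i}A_{n-e_i}\;=\;p_{e_i,n-e_i}^{n}\,A_n \;+\; \sum_{\substack{c\po n\\ c\neq n}}p_{e_i,n-e_i}^{c}\,A_c\,,\qquad p_{e_i,n-e_i}^{n}\neq 0.
\end{equation*}
Substituting the inductive expressions $A_{n-e_i}=v_{n-e_i}(A_{10},A_{01})$ and $A_c=v_c(A_{10},A_{01})$ writes $A_n$ as a polynomial in $A_{10},A_{01}$; the unique $\cD$-supported expansion --- provided by the simultaneous inductive proof that $\{A_{10}^{a_1}A_{01}^{a_2}\}_{a\in\cD}$ is a basis of the Bose--Mesner algebra --- defines $v_n$. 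The coefficient of $x^n$ in $v_n$ equals $(v_{n-e_i})_{n-e_i}/p_{e_i,n-e_i}^{n}\neq 0$: the $v_c$'s cannot contribute since $c\po n$, $c\neq n$ rules out $n\po c$ by antisymmetry, and the monomials $A_{10}^{b_1}A_{01}^{b_2}$ with $b\notin\cD$ arising from $x_i\,v_{n-e_i}$, when re-expressed in the $\cD$-basis, also cannot reach multidegree $n$ by the same antisymmetry. Condition (iii) of Definition \ref{def:biPpoly} is then established by rewriting $A_{e_i}A_{10}^{a_1}A_{01}^{a_2}$ through $A_{e_i}A_a$ and applying the intersection number condition together with the triangular inverse.

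The main obstacle is the bookkeeping in (ii) $\Rightarrow$ (i): one must verify simultaneously that the recursive $v_n$ is $\po$-compatible, $\cD$-supported, and of multidegree exactly $n$, alongside the basis property of $\{A_{10}^{a_1}A_{01}^{a_2}\}_{a\in\cD}$. All three points reduce to repeated, careful use of the axioms of $\po$ (its compatibility with $\leq$, stability under translation, and minimality of $(0,0)$) together with antisymmetry of $\po$ and the triangular structure of the change of basis.
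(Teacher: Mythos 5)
The paper does not actually prove Proposition \ref{prop:biPpoly2}: it states the characterization and defers to \cite{BCPVZ,BKZZ,BCVZZ}, so there is no in-text argument to compare against. Your proposal supplies the standard proof that those references use, and it is sound: the $\po$-triangular change of basis between $\{A_n\}_{n\in\cD}$ and the monomials $\{A_{10}^{a_1}A_{01}^{a_2}\}_{a\in\cD}$, the forward direction by pushing $A_{e_i}v_a(A_{10},A_{01})$ through condition (iii) and inverting, and the converse by recursive construction of $v_n$ from $A_{e_i}A_{n-e_i}$ using $p_{e_i,n-e_i}^{n}\neq 0$. Two points deserve an explicit line in a full write-up. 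First, the claim that the inverse change of basis is again $\po$-triangular (not merely $\leq$-triangular) needs the short induction showing that the inverse of a matrix supported on $\{(n,m):m\po n\}$ with invertible diagonal is supported on the same set; this is what lets you chain $c\po b\po a+e_i$ into $c\po a+e_i$ by transitivity. Second, the recursion in (ii) $\Rightarrow$ (i) should be organized as well-founded induction along the monomial order $\leq$ (using that $c\po n$, $c\neq n$ forces $c<n$ by compatibility and antisymmetry), since $\po$ alone is only a partial order; your appeal to antisymmetry to exclude contributions to the leading monomial $x^n$ from the subleading $A_c$ and from out-of-domain monomials $A_{10}^{b_1}A_{01}^{b_2}$ is exactly right once that induction is set up. With those details made explicit, your argument is complete and consistent with the characterization as used in Sections \ref{sec:biPpoly} and \ref{sec:biQpoly}.
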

An analogous definition exists for (refined) bivariate $Q$-polynomial association schemes as well as a characterization of such schemes in terms of Krein parameters.  
\begin{defi}\label{def:biQpoly} Let $\mathcal{D}^\star \subset \mathbb{N}^2$ contain $e_1, e_2$, and let $(\leq,\po)$ be a pair of monomial order and partial order on $\mathbb{N}^2$. A commutative association scheme $\mathcal{Z}$ is called $\po$-compatible bivariate $Q$-polynomial on the domain $\mathcal{D}^\star$ with respect to $\leq$ if the following three conditions are satisfied:
\begin{enumerate}
\item[(i)] If $n = (n_1, n_2) \in \mathcal{D}^\star$  and $0 \leq n_i' \leq  n_i$ for $i=1,2$, then $n' = (n_1', n_2') \in \mathcal{D}^\star$;
\item[(ii)]  There exists a relabeling of the primitive idempotent matrices of $\mathcal{Z}$:
\begin{equation}
\{E_n \ | \ n \in \mathcal{D}^\star\}
\end{equation}
such that for all $n \in \mathcal{D}^\star$ we have
\begin{equation}\label{eq:polyE}
|X|E_n = v_n^\star(|X|E_{10},|X|E_{01}) \qquad \text{(under Schur product)}
\end{equation}
where $X$ is the vertex set of the scheme, and $v_n^\star(x_1,x_2)$ is a $\po$-compatible bivariate polynomial of multidegree $n$ and all monomials $x_1^{a_1}x_2^{a_2}$ in $v_n^\star(x_1,x_2)$ satisfy $(a_1,a_2) \in \mathcal{D}^\star$;

\item[(iii)] For $i = 1,2$ and $a = (a_1, a_2) \in \mathcal{D}$, the product $E_{e_i} \circ  E_{10}^{\circ a_1}\circ E_{01}^{\circ a_2}$ is a
linear combination of
\begin{equation}
\{ E_{10}^{\circ b_1}\circ E_{01}^{\circ b_2}\ | \ b = (b_1,b_2) \in \mathcal{D}^\star, \ b \po a + e_i  \} \,.
\end{equation}
\end{enumerate}
\end{defi}

\begin{prop}\label{prop:biQpoly2} Let $\mathcal{D}^\star\subset \mathbb{N}^2$ such that $e_1, e_2\in \mathcal{D}^\star$ and $\mathcal{Z}$ be a commutative association scheme with primitive idempotents $\{E_n \ | \ n \in \mathcal{D}^\star\}$. Then the following two statements are
equivalent:
\begin{enumerate}
\item[(i)] $\mathcal{Z}$ is a $\po$-compatible bivariate $Q$-polynomial association scheme on $\mathcal{D}^\star$ with respect to 
$\leq$;
\item[(ii)] The condition (i) of Definition \ref{def:biQpoly} holds for $\mathcal{D}^\star$ and the Krein numbers satisfy,  
for each $i = 1,2$ and each $a \in \mathcal{D}^\star$, $q_{e_i, a}^b \neq 0$ for $b \in \mathcal{D}$ implies $b \po a + e_i$. Moreover, if $a + e_i \in \mathcal{D}^\star$, then $q_{e_i, a}^{a + e_i} \neq 0$ holds.
\end{enumerate}
\end{prop}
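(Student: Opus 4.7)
The plan is to prove Proposition \ref{prop:biQpoly2} by formal duality with Proposition \ref{prop:biPpoly2}, exchanging the roles of the matrix product and the Schur product. In the Bose--Mesner algebra $\cM$, the adjacency matrices $\{A_n\}$ are the $\circ$-idempotent basis, while the normalized primitive idempotents $\{|X|E_n\}$ are the idempotent basis under the ordinary product and obey $|X|E_a \circ |X|E_b = \sum_c q_{ab}^c\, |X|E_c$ with non-negative structure constants. Under the dictionary $A_n \leftrightarrow E_n$, $\cdot \leftrightarrow \circ$, $p_{ab}^c \leftrightarrow q_{ab}^c$, and $\cD \leftrightarrow \cD^\star$, every algebraic manipulation in the proof of Proposition \ref{prop:biPpoly2} transcribes step by step.

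For the direction (i) $\Rightarrow$ (ii), condition (i) of Definition \ref{def:biQpoly} is immediate. To control the Krein parameters, I would fix $a \in \cD^\star$ and $i \in \{1,2\}$ and expand $E_{e_i} \circ E_a$ in two ways. On the one hand, $E_{e_i} \circ E_a = \frac{1}{|X|}\sum_{b \in \cD^\star} q_{e_i,a}^b E_b$. On the other hand, using Definition \ref{def:biQpoly}(ii) to write $E_a$ as a $\po$-compatible polynomial (under $\circ$) in $E_{10}, E_{01}$ of multidegree $a$, and then invoking Definition \ref{def:biQpoly}(iii) for each resulting Schur monomial, one obtains a linear combination of $E_{10}^{\circ c_1} \circ E_{01}^{\circ c_2}$ with $(c_1, c_2) \po a+e_i$, and then, via the inductive polynomial expression for each such monomial, a linear combination of $E_c$ with $c \po a+e_i$. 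Uniqueness of the expansion in the basis $\{E_b\}$ forces $q_{e_i,a}^b = 0$ whenever $b \not\po a+e_i$. The non-vanishing of $q_{e_i,a}^{a+e_i}$ follows from a leading-term argument: the top Schur monomial $E_{10}^{\circ a_1}\circ E_{01}^{\circ a_2}$ appears in $E_a$ with a non-zero coefficient, and the change of basis between Schur monomials and primitive idempotents is triangular with respect to $\po$ with non-zero diagonal, so that the resulting $E_{a+e_i}$ coefficient does not cancel.

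For the converse (ii) $\Rightarrow$ (i), the polynomials $v_n^\star$ are built by induction on $n$ with respect to $\po$, starting from $v_{00}^\star = 1$, $v_{10}^\star(x_1,x_2) = x_1$, and $v_{01}^\star(x_1,x_2) = x_2$. For the inductive step at $n \neq 0, e_1, e_2$, pick any predecessor $a \in \cD^\star$ and index $i \in \{1,2\}$ with $a+e_i = n$, and rewrite $|X|(E_{e_i}\circ E_a) = \sum_b q_{e_i,a}^b E_b$ as
\begin{equation*}
q_{e_i,a}^{n}\, E_{n} \;=\; |X|\,(E_{e_i}\circ E_a) \;-\; \sum_{b \po n,\, b \neq n} q_{e_i,a}^b\, E_b.
\end{equation*}
The hypothesis $q_{e_i,a}^{n} \neq 0$ makes this solvable for $E_{n}$, and all the $E_b$ appearing on the right with $b \po n$ and $b \neq n$ already possess inductive polynomial expressions. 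Substituting these and using the Schur algebra structure yields $E_n$ as a $\po$-compatible bivariate polynomial of multidegree $n$ in $E_{10}, E_{01}$ under $\circ$. Condition (iii) of Definition \ref{def:biQpoly} then follows by substituting the polynomial representations into $E_{e_i} \circ E_a = \frac{1}{|X|}\sum_b q_{e_i,a}^b E_b$.

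The main obstacle is verifying that the change of basis between the Schur monomials $\{E_{10}^{\circ b_1}\circ E_{01}^{\circ b_2}\}$ and the primitive idempotents $\{E_b\}$ is triangular with respect to $\po$ with non-zero diagonal entries, and that this triangularity is preserved by the inductive construction. This is the mechanism that simultaneously guarantees the claimed multidegree of $v_n^\star$ and the $\po$-compatibility, and it hinges precisely on the non-vanishing condition $q_{e_i,a}^{a+e_i} \neq 0$ supplied by (ii). Once this triangularity is established, the rest of the argument mirrors the proof of Proposition \ref{prop:biPpoly2}.
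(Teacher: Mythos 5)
The paper does not actually prove Proposition \ref{prop:biQpoly2}: it is quoted as known, with the $P$-polynomial analogue (Proposition \ref{prop:biPpoly2}) attributed to \cite{BCPVZ,BKZZ,BCVZZ} and the $Q$-polynomial version introduced only by the remark that ``an analogous definition exists \dots as well as a characterization \dots in terms of Krein parameters.'' So there is no internal proof to compare against; what can be said is that your argument is exactly the standard one from those references, transported through the usual Bose--Mesner duality ($A_n \leftrightarrow |X|E_n$, ordinary product $\leftrightarrow$ Schur product, $p_{ab}^c \leftrightarrow q_{ab}^c$), and it is sound in outline. The two points you flag as the ``main obstacle'' are indeed the ones that need care, and both go through: the forward change of basis from Schur monomials to idempotents is $\po$-triangular with non-zero diagonal by Definition \ref{def:biQpoly}(ii), its inverse is again $\po$-triangular with non-zero diagonal (back-substitution in the incidence algebra of the poset), and antisymmetry of $\po$ then prevents any cancellation in the coefficient of $E_{a+e_i}$, which gives $q_{e_i,a}^{a+e_i}\neq 0$. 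In the converse direction your induction is correctly organized along the total order $\leq$ (since $b \po n$, $b \neq n$ forces $b < n$). The one step you gloss over is the requirement in Definition \ref{def:biQpoly}(ii) that \emph{all} monomial exponents of $v_n^\star$ lie in $\cD^\star$: the monomials of $x_i v_a^\star$ have exponents $c+e_i$ with $c \po a$, and $c+e_i \po n$ does not by itself place $c+e_i$ in $\cD^\star$, since membership in $\cD^\star$ is governed by the componentwise condition of Definition \ref{def:biQpoly}(i) rather than by $\po$. This is a genuine (if minor) technicality that the cited references address, and your proof should too; otherwise the construction of $v_n^\star$ in the step (ii) $\Rightarrow$ (i) is as in the literature.
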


\begin{rem}
Let us emphasize that for a symmetric association scheme such as $\A$, $p_{a,b}^c=0$ if and only if $p_{a,c}^b=0$. Therefore, in item (ii) of Proposition \ref{prop:biPpoly2} we also have that for $a,b\in\cD$, $p_{e_i,a}^b\neq 0$ implies $a \po b+e_i$, and if $a,a-e_i \in \cD$ then $p_{e_i,a}^{a-e_i} \neq 0$. Similar results hold for $q_{a,b}^c$ in (ii) of Proposition \ref{prop:biQpoly2}.
\end{rem}

\begin{rem}
A $\po$-compatible bivariate $P$- or $Q$-polynomial association scheme with respect to $\leq$ is bivariate $P$- or $Q$-polynomial with respect to $\leq$ in the sense of \cite{BKZZ}, as $a \po b$ implies $a \leq b$ for any $a,b \in \NN^2$. 
\end{rem}

In the following, the monomial order that we will use is the \textit{deg-lex} order, defined on $\NN^2$ by
\begin{equation}\label{eq:deglex}
 (m_1,m_2) \leq (n_1,n_2) \iff \left\{
	\begin{array}{ll}
		 m_1+m_2 < n_1+n_2\,, \\
		 \text{or} \\
		m_1+m_2 = n_1+n_2 \ \text{and} \ m_1\leq n_1\,.
	\end{array}
\right. 
\end{equation} 
In terms of monomials $x_1^{n_1}x_2^{n_2}$, the order \eqref{eq:deglex} satisfies $x_2 < x_1$. 
An alternative deg-lex order that we will also need, denoted here by $\leq'$, is obtained by exchanging the roles of the two coordinates,
\begin{equation}\label{eq:deglex1}
 (m_1,m_2) \leq' (n_1,n_2) \iff \left\{
	\begin{array}{ll}
		 m_1+m_2 < n_1+n_2\,, \\
		 \text{or} \\
		m_1+m_2 = n_1+n_2 \ \text{and} \ m_2\leq n_2\,.
	\end{array}
\right. 
\end{equation} 
Note that $x_1 <' x_2$.

Furthermore, we will use a partial order, denoted $\poo$, that depends on two parameters $\alpha,\beta$ satisfying $0\leq \alpha,\beta \leq 1$ and $\alpha\beta\neq1$, and that is defined as follows on $\NN^2$:
\begin{equation}\label{eq:po}
(m_1,m_2) \preceq_{(\alpha, \beta)} (n_1,n_2) \iff \left\{
	\begin{array}{ll}
		 m_1+ \alpha m_2 \leq n_1 + \alpha n_2 \\
		 \text{and} \\
		\beta m_1 + m_2 \leq \beta n_1 + n_2\,.
	\end{array}
\right.
\end{equation}
The partial order defined by \eqref{eq:po} was used in \cite{BCPVZ} to study the bivariate $P$-polynomial structures of several examples of higher rank association schemes, and it is further discussed in \cite{BCVZZ}. In order for $\poo$ to be compatible with the deg-lex order $\leq$ defined in \eqref{eq:deglex}, one must restrict $0\leq \alpha<1$ and $0\leq \beta \leq 1$, and to be compatible with the deg-lex order $\leq'$ defined in \eqref{eq:deglex1} and used in \cite{BCPVZ}, one must restrict $0\leq \alpha \leq 1$ and $0\leq \beta<1$.

Figures \ref{fig:intnumb} and \ref{fig:Kreinparam} illustrate which intersection numbers or Krein parameters can be non-zero, according to Propositions \ref{prop:biPpoly2} and \ref{prop:biQpoly2}; this will be helpful later when checking some specific examples of $\poo$-compatible bivariate $P$- or $Q$-polynomial structures. 
In addition to these possibly non-vanishing parameters, the general bivariate $P$- or $Q$-polynomial association schemes defined in \cite{BKZZ} (without the use of a compatible partial order) allow more possibly non-vanishing intersection numbers or Krein parameters, represented by the smaller dots in the figure. This illustrates that the compatibility of a bivariate $P$- or $Q$-polynomial association scheme with a partial order carries important structural information of the scheme. 
\begin{figure}[hbtp]
\begin{center}
\begin{subfigure}[b]{0.45\textwidth}
\centering
\begin{tikzpicture}[scale=0.45]
\draw[->] (0,0)--(11,0);\draw[->] (0,0)--(0,11);
\draw [fill] (4,5) circle (\brad);
\draw [fill] (4,6) circle (\brad);
\draw [fill] (4,4) circle (\brad);
\draw [fill] (3,5) circle (\brad);
\draw [fill] (3,6) circle (\brad);
\draw [fill] (3,7) circle (\brad);
\draw [fill] (5,5) circle (\brad);
\draw [fill] (5,4) circle (\brad);
\draw [fill] (5,3) circle (\brad);

\draw [fill] (6,3) circle (\srad);
\draw [fill] (7,2) circle (\srad);
\draw [fill] (8,1) circle (\srad);
\draw [fill] (9,0) circle (\srad);

\draw [fill] (2,7) circle (\srad);
\draw [fill] (1,8) circle (\srad);
\draw [fill] (0,9) circle (\srad);

\draw [fill] (6,2) circle (\srad);
\draw [fill] (7,1) circle (\srad);
\draw [fill] (8,0) circle (\srad);

\draw [fill] (2,8) circle (\srad);
\draw [fill] (1,9) circle (\srad);
\draw [fill] (0,10) circle (\srad);

\draw (0,11) node[above] {$n_2$};
\draw (11,0) node[above] {$n_1$};
\draw (4,0) node[below] {$m_1$};
\draw (0,5) node[left] {$m_2$};
\draw[dashed,thin] (4,0)--(4,5) --(0,5);
\end{tikzpicture}
\caption{$p_{10,m}^n$, $(\leq,\po_{(0,1)})$}
\label{fig:recp10}
\end{subfigure}%
\begin{subfigure}[b]{0.45\textwidth}
\centering
\begin{tikzpicture}[scale=0.45]
\draw[->] (0,0)--(11,0);\draw[->] (0,0)--(0,11);
\draw [fill] (4,5) circle (\brad);
\draw [fill] (4,6) circle (\brad);
\draw [fill] (4,4) circle (\brad);

\draw [fill] (5,4) circle (\srad);
\draw [fill] (6,3) circle (\srad);
\draw [fill] (7,2) circle (\srad);
\draw [fill] (8,1) circle (\srad);
\draw [fill] (9,0) circle (\srad);

\draw [fill] (3,6) circle (\srad);
\draw [fill] (2,7) circle (\srad);
\draw [fill] (1,8) circle (\srad);
\draw [fill] (0,9) circle (\srad);

\draw [fill] (5,3) circle (\srad);
\draw [fill] (6,2) circle (\srad);
\draw [fill] (7,1) circle (\srad);
\draw [fill] (8,0) circle (\srad);

\draw [fill] (2,8) circle (\srad);
\draw [fill] (3,7) circle (\srad);
\draw [fill] (1,9) circle (\srad);
\draw [fill] (0,10) circle (\srad);

\draw (0,11) node[above] {$n_2$};
\draw (11,0) node[above] {$n_1$};
\draw (4,0) node[below] {$m_1$};
\draw (0,5) node[left] {$m_2$};
\draw[dashed,thin] (4,0)--(4,5) --(0,5);
\end{tikzpicture}
\caption{$p_{01,m}^n$, $(\leq,\po_{(0,1)})$}
\label{fig:recp01}
\end{subfigure}%
\end{center}
\caption{Examples of possibly non-vanishing intersection numbers. For a fixed point $m=(m_1,m_2)$, the coordinates $n=(n_1,n_2)$ of the larger dots in the graphs represent when (a) $p_{10,m}^{n}$ and (b) $p_{01,m}^{n}$ may be non-zero for a $\po_{(0,1)}$-compatible bivariate $P$-polynomial association scheme with respect to $\leq$. The smaller dots must be added without the $\po_{(0,1)}$-compatibility. 
Note that all these dots lie on three lines: $n_1+n_2=m_1+m_2$ and $n_1+n_2=m_1+m_2 \pm 1$ with the upper line truncated below, and the lower line truncated above.}
\label{fig:intnumb}  
\end{figure}
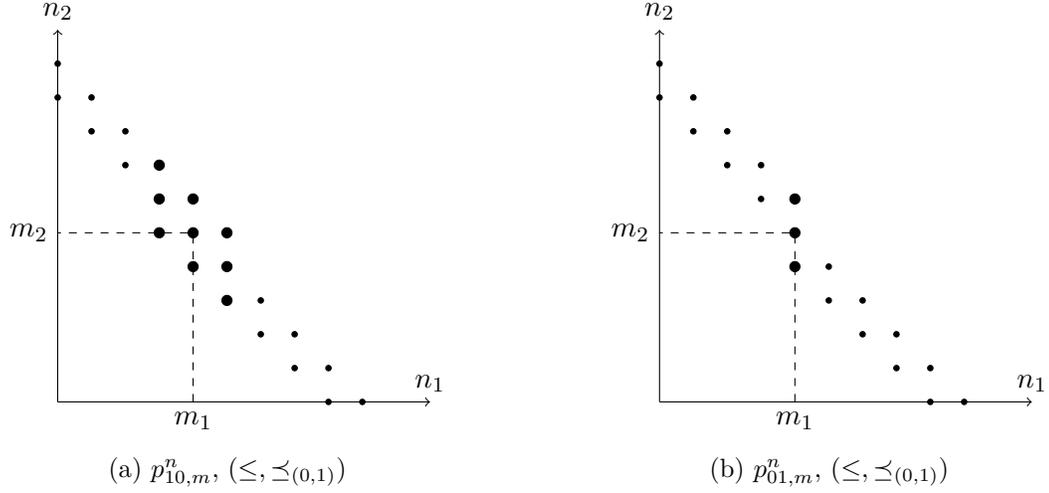
\begin{figure}[hbtp]
\begin{center}
\begin{subfigure}[b]{0.45\textwidth}
\centering
\begin{tikzpicture}[scale=0.45]
\draw[->] (0,0)--(11,0);\draw[->] (0,0)--(0,11);
\draw [fill] (4,5) circle (\brad);
\draw [fill] (3,5) circle (\brad);
\draw [fill] (5,5) circle (\brad);

\draw [fill] (5,4) circle (\srad);
\draw [fill] (6,3) circle (\srad);
\draw [fill] (7,2) circle (\srad);
\draw [fill] (8,1) circle (\srad);
\draw [fill] (9,0) circle (\srad);

\draw [fill] (3,6) circle (\srad);
\draw [fill] (2,7) circle (\srad);
\draw [fill] (1,8) circle (\srad);
\draw [fill] (0,9) circle (\srad);

\draw [fill] (3,5) circle (\srad);
\draw [fill] (2,6) circle (\srad);
\draw [fill] (1,7) circle (\srad);
\draw [fill] (0,8) circle (\srad);

\draw [fill] (6,4) circle (\srad);
\draw [fill] (7,3) circle (\srad);
\draw [fill] (8,2) circle (\srad);
\draw [fill] (9,1) circle (\srad);
\draw [fill] (10,0) circle (\srad);

\draw (0,11) node[above] {$n_2$};
\draw (11,0) node[above] {$n_1$};
\draw (4,0) node[below] {$m_1$};
\draw (0,5) node[left] {$m_2$};
\draw[dashed,thin] (4,0)--(4,5) --(0,5);
\end{tikzpicture}
\caption{$q_{10,m}^n$, $(\leq',\po_{(1,0)})$}
\label{fig:recq10}
\end{subfigure}%
\begin{subfigure}[b]{0.45\textwidth}
\centering
\begin{tikzpicture}[scale=0.45]
\draw[->] (0,0)--(11,0);\draw[->] (0,0)--(0,11);
\draw [fill] (4,5) circle (\brad);
\draw [fill] (5,5) circle (\brad);
\draw [fill] (3,5) circle (\brad);
\draw [fill] (2,6) circle (\brad);
\draw [fill] (3,6) circle (\brad);
\draw [fill] (4,6) circle (\brad);
\draw [fill] (4,4) circle (\brad);
\draw [fill] (5,4) circle (\brad);
\draw [fill] (6,4) circle (\brad);

\draw [fill] (6,3) circle (\srad);
\draw [fill] (7,2) circle (\srad);
\draw [fill] (8,1) circle (\srad);
\draw [fill] (9,0) circle (\srad);

\draw [fill] (2,7) circle (\srad);
\draw [fill] (1,8) circle (\srad);
\draw [fill] (0,9) circle (\srad);

\draw [fill] (1,7) circle (\srad);
\draw [fill] (0,8) circle (\srad);

\draw [fill] (8,2) circle (\srad);
\draw [fill] (7,3) circle (\srad);
\draw [fill] (9,1) circle (\srad);
\draw [fill] (10,0) circle (\srad);

\draw (0,11) node[above] {$n_2$};
\draw (11,0) node[above] {$n_1$};
\draw (4,0) node[below] {$m_1$};
\draw (0,5) node[left] {$m_2$};
\draw[dashed,thin] (4,0)--(4,5) --(0,5);
\end{tikzpicture}
\caption{$q_{01,m}^n$, $(\leq',\po_{(1,0)})$}
\label{fig:recq01}
\end{subfigure}%
\end{center}
\caption{Examples of possibly non-vanishing Krein parameters. For a fixed point $m=(m_1,m_2)$, the coordinates $n=(n_1,n_2)$ of the larger dots in the graphs represent when (a) $q_{10,m}^{n}$ and (b) $q_{01,m}^{n}$ may be non-zero for a $\po_{(1,0)}$-compatible bivariate $Q$-polynomial association scheme with respect to $\leq'$. The smaller dots must be added without the $\po_{(1,0)}$-compatibility. Note that all these dots lie on three lines: $n_1+n_2=m_1+m_2$ and $n_1+n_2=m_1+m_2 \pm 1$ with the upper line truncated above, and the lower line truncated below.}
\label{fig:Kreinparam}  
\end{figure}
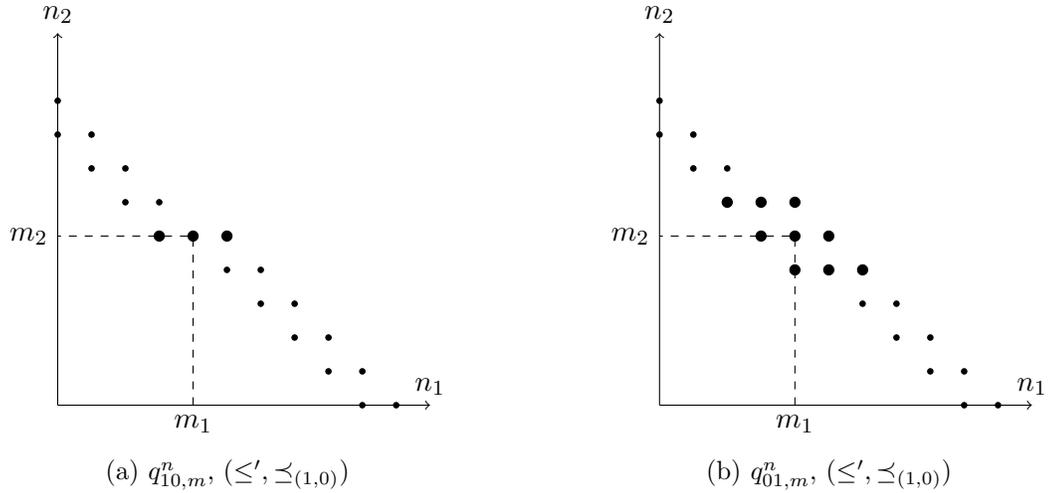

\section{Bivariate $P$-polynomial structure of $\A$} \label{sec:biPpoly}
We are now ready to identify the bivariate $P$-polynomial structure of association schemes based on attenuated spaces $\A$, using the previous results and definitions. 

From equation~\eqref{eq:Tij}, one can obtain all the eigenvalues $T_{10}(r,s)$ and $T_{01}(r,s)$ of the matrices $A_{10}$ and $A_{01}$, respectively, with $(r,s)\in \cD^\star$. These eigenvalues are written as follows in terms of
$\lambda(s)=q^{-s}-1$ and $\Lambda(r,s)=(q^{-r}-1)(1-q^{r+s-n-1})$: 
\begin{align}
&T_{10}(r,s)
=\frac{q^{\ell+n+1}}{(1-q)^2}\left((1-q^{-m})(1-q^{m-n})+(1-q^{m-n})\lambda(s)+ q^{-s}\Lambda(r,s)\right)\,, \label{eq:T10}\\
&T_{01}(r,s)
= \frac{-q^{\ell+m}}{1-q} \left((1-q^{-\ell})(1-q^{-m})+\lambda(s)\right)\,. \label{eq:T01}
\end{align}
Using the recurrence relations in Proposition \ref{prop:recT}, we derive the following 
formulas for the product of the eigenvalues $T_{10}(r,s)$ and $T_{01}(r,s)$ with $T_{ij}(r,s)$.

\begin{prop}\label{prop:recTPpoly}
The eigenvalues $T_{ij}(r,s)$ of association schemes based on attenuated spaces $\A$ satisfy the following relations for all $(i,j) \in \cD$ and $(r,s) \in \cD^\star$:
\begin{subequations}
\begin{align}\label{eq:recT10}
 T_{01}(r,s) T_{ij}(r,s)=& \sum_{\epsilon\in \{0,+1,-1\} }  \tcb^{\epsilon}(i,j)\ T_{i,j+\epsilon}(r,s) \,,
\end{align}
where
\begin{align}
&\tcb^{+}(i,j)=\frac{q^{i+j}(1-q^{j+1})}{1-q}   \,, \\
&\tcb^{-}(i,j)=
\frac{(q^\ell-q^{j-1})(q^{i+j-1}-q^{m})}{1-q}
\,, \\
&\tcb^{0}(i,j)=
\frac{(q^\ell-1)(1-q^m)-q^{i+j-1}(1-q^j)-(q^\ell-q^{j})(q^{i+j}-q^m)}{1-q}\,,
\end{align}
\end{subequations}
and 
\begin{subequations}
\begin{align}\label{eq:recT01}
 T_{10}(r,s) T_{i,j}(r,s)=& \sum_{\epsilon,\epsilon' \in \{0,+1,-1\}}  \tcc^{\epsilon,\epsilon'}(i,j)\ T_{i+\epsilon,j+\epsilon'}(r,s) \,,
\end{align}
where
\allowdisplaybreaks
\begin{align}
&\tcc^{+0}(i,j)=
\frac{q^j(1-q^{i+1})^2}{(1-q)^2}
\,, \\
&\tcc^{+-}(i,j)= \frac{(q^{\ell}-q^{j-1})(1-q^{i+1})^2}{(1-q)^2}   \,, \\
&\tcc^{0+}(i,j)
=\frac{q^{i+j+1}(1-q^i)(1-q^{j+1})} {(1-q)^2}   \,, \\
& \tcc^{00}(i,j)=\frac{q^{\ell+1}\left(1-q^i\right)}{(1-q)^2} \left(1-q^{n-m}+q^i-q^m-q^{-1}(1-q^i)-q^m\cb^0(i,j)\right) \,, \\
&\tcc^{0-}(i,j)=\frac{(q^{\ell}-q^{j-1})(1-q^i)(q^{i+j}-q^{m+1})}{(1-q)^2}   \,, \\
&\tcc^{-+}(i,j)=\frac{q^{\ell+i}(1-q^{j+1})(q^{i-1}-q^{n-m})}{(1-q)^2}   \,, \\
& \tcc^{-0}(i,j)=\frac{q^{\ell}(q^{i+j}-q^{m+1})(q^{i-1}-q^{n-m})}{(1-q)^2}   \,, \\
& \tcc^{++}(i,j) = \tcc^{--}(i,j)=0 \,.
\end{align}
\end{subequations}
\end{prop}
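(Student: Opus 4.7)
The plan is to derive both identities by direct substitution, reducing everything to Proposition~\ref{prop:recT}. The crucial observation is that the expressions \eqref{eq:T10} and \eqref{eq:T01} write $T_{10}(r,s)$ and $T_{01}(r,s)$ as affine combinations of the two $(r,s)$-dependent quantities $\lambda(s)$ and $q^{-s}\Lambda(r,s)$ that act as multipliers of $T_{ij}(r,s)$ in the recurrence relations \eqref{eq:recT1} and \eqref{eq:recT2}. So no independent calculation is needed; both products can be expanded mechanically.

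For \eqref{eq:recT10}, I would multiply \eqref{eq:T01} by $T_{ij}(r,s)$ and immediately substitute \eqref{eq:recT1} for $\lambda(s) T_{ij}(r,s)$. Reading off coefficients yields $\tcb^{\pm}(i,j) = \tfrac{-q^{\ell+m}}{1-q}\,\cb^{\pm}(i,j)$, while $\tcb^{0}(i,j)$ additionally absorbs the constant contribution $(1-q^{-\ell})(1-q^{-m})$ coming from the $\lambda$-independent term in \eqref{eq:T01}. A short simplification (pulling the overall $q^{\ell+m}$ inside and regrouping) produces the stated closed forms.

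For \eqref{eq:recT01}, I would multiply \eqref{eq:T10} by $T_{ij}(r,s)$ to obtain three pieces: a constant multiple of $T_{ij}$, a multiple of $\lambda(s) T_{ij}$, and a multiple of $q^{-s}\Lambda(r,s) T_{ij}$. Applying \eqref{eq:recT1} to the second piece and \eqref{eq:recT2} to the third, the $q^{-s}\Lambda$ piece alone produces the six nontrivial shifts $(i\pm 1, j)$, $(i\pm 1, j\mp 1)$, $(i, j\pm 1)$; the $\lambda$ piece contributes only to the shifts $(i, j+\epsilon)$ with $\epsilon \in \{-1, 0, 1\}$; and the constant piece contributes only to $(i, j)$. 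Consequently, for any $(\epsilon, \epsilon')$ with $\epsilon \neq 0$ one obtains the clean identification $\tcc^{\epsilon,\epsilon'}(i,j) = \tfrac{q^{\ell+n+1}}{(1-q)^2}\,\cc^{\epsilon,\epsilon'}(i,j)$, which matches the stated formulas after pulling powers of $q$ around. The $(0, \pm 1)$ coefficients are sums of a $\cc^{0,\pm 1}$ term and a $(1-q^{m-n})\cb^{\pm}$ term, and $\tcc^{00}(i,j)$ is the sum of three contributions rescaled by $q^{\ell+n+1}/(1-q)^2$, which is exactly the form written in the statement (the authors have chosen to leave $\cb^0(i,j)$ unexpanded).

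The main obstacle is simply bookkeeping: the $(0, \pm 1)$ coefficients demand the collapse of a two-term sum into the compact factor $(q^{\ell}-q^{j-1})(q^{i+j}-q^{m+1})$ (and its mirror), which hinges on the identity $(q^{i-n+m}-1) + (1-q^{m-n}) = q^{m-n}(q^{i}-1)$ and the analogous one for the $\cb^{-}$ contribution. Aside from these algebraic regroupings, no new analytic input beyond Proposition~\ref{prop:recT} and \eqref{eq:T10}--\eqref{eq:T01} is required; the challenge is simply to execute the coefficient matching carefully for all seven nonzero shifts.
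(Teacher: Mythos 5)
Your proposal is correct and follows exactly the paper's (very terse) proof, which simply states that the product formulas follow from \eqref{eq:T10}, \eqref{eq:T01} and Proposition~\ref{prop:recT}; you have merely spelled out the coefficient bookkeeping, and the identifications you give (e.g.\ $\tcb^{\pm}=\tfrac{-q^{\ell+m}}{1-q}\cb^{\pm}$, $\tcc^{\epsilon,\epsilon'}=\tfrac{q^{\ell+n+1}}{(1-q)^2}\cc^{\epsilon,\epsilon'}$ for $\epsilon\neq 0$, and the regrouping $(q^{i-n+m}-1)+(1-q^{m-n})=q^{m-n}(q^i-1)$ for the $(0,\pm1)$ terms) all check out against the stated coefficients.
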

\begin{proof}
The two product formulas follow straightforwardly from equations \eqref{eq:T10}, \eqref{eq:T01} and Proposition~\ref{prop:recT}. 
\end{proof}

\paragraph{Intersection numbers.}
By substituting the first relation of \eqref{eq:eigenvalues} in \eqref{eq:Amnij}, and using the property $E_{ij} E_{mn} = \delta_{im}\delta_{jn} E_{ij}$, one can identify the coefficients in the product formulas of the eigenvalues $T_{ij}(r,s)$ with the intersection numbers of $\A$, \textit{i.e.} 
\begin{equation}
T_{mn}(r,s)T_{ij}(r,s) = \sum_{(a,b) \in \cD} p_{mn,ij}^{ab} T_{ab}(r,s)\,.
\end{equation}
Therefore, one obtains the following from Proposition \ref{prop:recTPpoly}.
\begin{coro}\label{coro:intnumb}
The non-vanishing intersection numbers $p_{10,ij}^{ab}$ and $p_{01,ij}^{ab}$ of the association schemes based on attenuated spaces $\A$ are given by
\begin{equation}\label{eq:intnumb}
p_{10,ij}^{i+\epsilon,j+\epsilon'} = \tcc^{\epsilon,\epsilon'}(i,j) \,, \qquad  p_{01,ij}^{i,j+\epsilon} = \tcb^{\epsilon}(i,j)\, , \qquad \text{for } \epsilon,\epsilon' \in \{0,+1,-1\}\,,
\end{equation}
where $\tcb^{\epsilon}(i,j)$ and $\tcc^{\epsilon,\epsilon'}(i,j)$ are the coefficients given in Proposition \ref{prop:recTPpoly}. 
\end{coro}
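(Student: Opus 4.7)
The plan is to translate the product relations of Proposition \ref{prop:recTPpoly} into identifications of intersection numbers via the spectral decomposition of the Bose--Mesner algebra. The key input, beyond Proposition \ref{prop:recTPpoly}, is the expansion $A_{ij}=\sum_{(r,s)\in\cD^\star} T_{ij}(r,s) E_{rs}$ given in \eqref{eq:eigenvalues}, together with the orthogonality of primitive idempotents $E_{rs}E_{r's'}=\delta_{rr'}\delta_{ss'}E_{rs}$.

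First, I would compute $A_{mn}A_{ij}$ in two ways. Spectrally,
\begin{equation*}
A_{mn}A_{ij} = \sum_{(r,s)\in\cD^\star} T_{mn}(r,s)\,T_{ij}(r,s)\, E_{rs},
\end{equation*}
while by the definition \eqref{eq:Amnij} of intersection numbers,
\begin{equation*}
A_{mn}A_{ij} = \sum_{(a,b)\in\cD} p_{mn,ij}^{ab}\, A_{ab} = \sum_{(r,s)\in\cD^\star}\Bigl(\sum_{(a,b)\in\cD} p_{mn,ij}^{ab}\, T_{ab}(r,s)\Bigr) E_{rs}.
\end{equation*}
Since the primitive idempotents are linearly independent, equating coefficients of $E_{rs}$ yields the scalar identity
\begin{equation*}
T_{mn}(r,s)\,T_{ij}(r,s) = \sum_{(a,b)\in\cD} p_{mn,ij}^{ab}\, T_{ab}(r,s) \qquad\text{for all } (r,s)\in\cD^\star.
\end{equation*}

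Next, I would use uniqueness: because $\{A_{ab}\}_{(a,b)\in\cD}$ and $\{E_{rs}\}_{(r,s)\in\cD^\star}$ are both bases of $\cM$ (with $|\cD|=|\cD^\star|$), the matrix $[T_{ab}(r,s)]$ is invertible, so the functions $\{T_{ab}(\cdot)\}_{(a,b)\in\cD}$ on $\cD^\star$ are linearly independent. Consequently, the intersection numbers $p_{mn,ij}^{ab}$ are the uniquely determined expansion coefficients of the product $T_{mn}T_{ij}$ in the basis $\{T_{ab}\}$.

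Finally, I would specialize this uniqueness statement to $(m,n)=(0,1)$ and $(m,n)=(1,0)$, comparing with \eqref{eq:recT10} and \eqref{eq:recT01}, respectively. The expansions there involve only the indices $(i,j+\epsilon)$ or $(i+\epsilon,j+\epsilon')$, and reading off the coefficients gives $p_{01,ij}^{i,j+\epsilon}=\tcb^\epsilon(i,j)$ and $p_{10,ij}^{i+\epsilon,j+\epsilon'}=\tcc^{\epsilon,\epsilon'}(i,j)$; all other intersection numbers $p_{01,ij}^{ab}$ and $p_{10,ij}^{ab}$ must vanish. There is no substantive obstacle once Proposition \ref{prop:recTPpoly} is available, as the argument is a direct application of the duality between adjacency matrices and primitive idempotents; the only point requiring mild care is the linear independence of $\{T_{ab}(\cdot)\}$, which follows from the invertibility of the eigenvalue matrix of the scheme.
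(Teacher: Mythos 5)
Your proposal is correct and follows essentially the same route as the paper: substituting the spectral decomposition $A_{ij}=\sum_{(r,s)}T_{ij}(r,s)E_{rs}$ into the product $A_{mn}A_{ij}$, using orthogonality of the primitive idempotents to obtain $T_{mn}(r,s)T_{ij}(r,s)=\sum_{(a,b)}p_{mn,ij}^{ab}T_{ab}(r,s)$, and then reading off the coefficients from Proposition \ref{prop:recTPpoly}. Your explicit appeal to the invertibility of the eigenvalue matrix to justify uniqueness of the expansion coefficients is a sound (and standard) detail that the paper leaves implicit.
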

Note that the intersection numbers of the association schemes based on attenuated spaces were obtained in \cite{BCPVZ} by a direct combinatorial counting argument. The above is a different approach making use of the recurrence, contiguity, and difference relations of the $q$-Krawtchouk and the dual $q$-Hahn polynomials.  

\begin{coro}
    The association scheme based on attenuated spaces $\A$ is $\preceq_{(0,1)}$-compatible bivariate $P$-polynomial on the domain $\cD$ with repsect to $\leq$ (as in \eqref{eq:deglex}). 
\end{coro}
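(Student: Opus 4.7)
The plan is to invoke Proposition \ref{prop:biPpoly2} rather than verify Definition \ref{def:biPpoly} directly: this reduces the claim to a check on the shape of $\cD$ together with a verification of the non-vanishing and partial-order structure of the intersection numbers $p_{e_i,(i,j)}^{(a,b)}$, both of which are already accessible from Corollary \ref{coro:intnumb}. First I would address condition (i) of Definition \ref{def:biPpoly}: the description \eqref{eq:domain} exhibits $\cD$ as the intersection of three lower sets of $\NN^2$, namely those cut out by $a+b\le m$, $a\le n-m$ and $b\le \ell$, so $\cD$ itself is closed under coordinatewise decrease.

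Next I would read off from Corollary \ref{coro:intnumb} the list of possibly non-zero intersection numbers of the form $p_{e_i,(i,j)}^{(a,b)}$. For $e_2=(0,1)$ the candidates are $(a,b)\in\{(i,j-1),(i,j),(i,j+1)\}$, and for $e_1=(1,0)$ they correspond to the seven pairs $(\epsilon,\epsilon')\in\{(+,0),(+,-),(0,+),(0,0),(0,-),(-,+),(-,0)\}$ that survive in Proposition \ref{prop:recTPpoly}. Since $(m_1,m_2)\po_{(0,1)}(n_1,n_2)$ amounts exactly to $m_1\le n_1$ together with $m_1+m_2\le n_1+n_2$, it remains to compare each of these ten candidate targets with $(i,j+1)$ or $(i+1,j)$ respectively. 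Each comparison is one-line bookkeeping; the tightest case $(\epsilon,\epsilon')=(0,+)$ gives $(i,j+1)\po_{(0,1)}(i+1,j)$ from $i\le i+1$ and $i+j+1\le i+j+1$, and the remaining nine cases are strictly easier.

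To finish I would check that the two leading intersection numbers $p_{01,(i,j)}^{(i,j+1)}=\tcb^{+}(i,j)$ and $p_{10,(i,j)}^{(i+1,j)}=\tcc^{+0}(i,j)$ are non-zero whenever their target point lies in $\cD$. This is transparent from the expressions in Proposition \ref{prop:recTPpoly}: $\tcb^{+}(i,j)$ carries the factor $1-q^{j+1}$ and $\tcc^{+0}(i,j)$ carries $(1-q^{i+1})^{2}$, neither of which vanishes for non-negative integers $i,j$ when $q$ is a prime power. I do not anticipate any serious obstacle: Proposition \ref{prop:biPpoly2} converts the statement into a finite list of elementary inequalities, and the only point to watch is to verify \emph{both} defining inequalities of $\po_{(0,1)}$ for each candidate, rather than the single deg-lex condition $\le$.
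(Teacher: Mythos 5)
Your proposal is correct and follows essentially the same route as the paper: both reduce the claim to Proposition \ref{prop:biPpoly2}, note that $\cD$ in \eqref{eq:domain} is closed under coordinatewise decrease, and then verify from Corollary \ref{coro:intnumb} that every non-vanishing $p_{e_k,ij}^{ab}$ has $(a,b)\preceq_{(0,1)} (i,j)+e_k$ and that the leading coefficients $\tcb^{+}$ and $\tcc^{+0}$ are non-zero. You simply spell out the inequality checks that the paper leaves as ``easy to check''; the only small point you leave implicit is the (already established) compatibility of $\preceq_{(0,1)}$ with the deg-lex order $\leq$.
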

\begin{proof}
    The domain $\cD$ of the scheme as defined in \eqref{eq:domain} satisfies condition (i) of Definition~\ref{def:biPpoly}.
    Recall that the partial order $\preceq_{(0,1)}$ is compatible with the monomial order $\leq$. Using equations \eqref{eq:intnumb}, it is easy to check that for $k=1,2$, all pairs $(a,b)$ such that $p_{e_k,ij}^{ab}\neq 0$ satisfy $(a,b) \preceq_{(0,1)} e_k+(i,j)$ (see also Figure \ref{fig:intnumb}), 
    and if $e_k+(i,j)\in \cD$ then $p_{e_k,ij}^{e_k+(i,j)}\neq 0$. By Proposition~\ref{prop:biPpoly2}, the result follows. 
\end{proof}
The bivariate polynomials $v_{n}$ of Definition \ref{def:biPpoly} 
satisfy 
\begin{equation}
T_{ij}(r,s)=v_{ij}(T_{10}(r,s),T_{01}(r,s))\,, 
\end{equation}
and hence 
\begin{align*}
    &xv_{ij}(x,y)=\sum_{\epsilon,\epsilon' \in \{0,+1,-1\}}  \tcc^{\epsilon,\epsilon'}(i,j)\ v_{i+\epsilon,j+\epsilon'}(x,y)\, , \\
    &yv_{ij}(x,y)=\sum_{\epsilon\in \{0,+1,-1\} }  \tcb^{\epsilon}(i,j)\ v_{i,j+\epsilon}(x,y)\,. 
\end{align*}
All the bivariate polynomials $v_{ij}(x,y)$ can be deduced from the initial conditions $v_{00}=1$, $v_{10}=x$ and $v_{01}=y$ and the above recurrence relations. 
\begin{rem}
A bivariate $P$-polynomial structure of $\A$ was obtained in \cite{BCPVZ} for restricted parameter $\ell$, because of a restriction on the admissible domains $\cD$ in the definition of bivariate $P$-polynomial association scheme proposed in \cite{BCPVZ}. The restriction is removed in \cite{BKZZ}, where the definition of multivariate $P$-polynomial association scheme differs. For a comparison between the two definitions and discussion of their implications, see \cite{BCVZZ}. 
\end{rem}

\section{Bivariate $Q$-polynomial structure of $\A$} \label{sec:biQpoly}
We now proceed similarly to identify a bivariate $Q$-polynomial structure for the association scheme $\A$.

The dual eigenvalues $U_{10}(i,j)$ and $U_{01}(i,j)$ respectively associated to the idempotents $E_{10}$ and $E_{01}$, with $(i,j)\in \cD$, are computed from equation \eqref{eq:Urs} and written in terms of $\theta_i=q^{-i}-1$ as follows: 
\begin{align}
&U_{10}(i,j) =\frac{q-q^n}{1-q}\left(1+ \frac{(1-q^{-n})\theta_i}{(1-q^{-m})(1-q^{m-n})}\right)\,, \label{eq:U10} \\
&U_{01}(i,j)=\frac{(q^\ell-1)(1-q^n)}{1-q}\left(1+\frac{\theta_i}{(1-q^{-m})}+ \frac{q^{-i}\theta_j}{(1-q^{-m})(1-q^{-\ell})}\right)\,. \label{eq:U01}
\end{align}

\begin{prop}\label{prop:recUQpoly}
The dual eigenvalues $U_{rs}(ij)$ of association schemes based on attenuated spaces $\A$ satisfy the following relations for all $(i,j) \in \cD$ and $(r,s) \in \cD^\star$:
\begin{subequations}
\begin{align}\label{eq:diffU1}
 U_{10}(i,j) U_{rs}(i,j)=& \sum_{\epsilon\in \{0,+1,-1\} }  \tcB^{\epsilon}(r,s)\ U_{r+\epsilon,s}(i,j) \,,
\end{align}
where
\begin{align}
&\tcB^{+}(r,s)=\frac{q^{r+s}(1-q^{-n})(1-q^{-n+1})(1-q^{r+1})(1-q^{r+m-n})(1-q^{r+s-m})}{(1-q)(1-q^{-m})(1-q^{m-n})(1-q^{2r+s-n})(1-q^{2r+s-n+1})}   \,, \\
&\tcB^{-}(r,s)=\frac{q (1-q^{n-1}) (1-q^{-n}) (1-q^{r+m-n-1}) (1-q^{r+s-m-1}) (1-q^{r+s-n-2})}{(1-q) (1-q^{-m}) (1-q^{m-n}) (1-q^{2r+s-n-2})(1-q^{2r+s-n-3})}\,, \\
&\tcB^{0}(r,s)=\frac{q(1-q^{n-1})}{1-q}+\frac{q(1-q^{n-1})(1-q^{-n})}{(1-q)(1-q^{-m}) (1-q^{m-n})}\cB^0(r,s)\,,
\end{align}
\end{subequations}
and 
\begin{align}\label{eq:diffU2}
 U_{01}(i,j) U_{rs}(i,j)=& \sum_{\epsilon,\epsilon' \in \{0,+1,-1\}}  \tcC^{\epsilon,\epsilon'}(r,s)\ U_{r+\epsilon,s+\epsilon'}(i,j) \,,
\end{align}
where
\begin{subequations}
\allowdisplaybreaks
\begin{align}
&\tcC^{+0}(r,s)=-\frac{q^{r+s}(1-q^{-n})(1-q^{s})(1-q^{r+1})(1-q^{r+m-n})(1-q^{r+s-m})}{(1-q)(1-q^{-m})(1-q^{2r+s-n})(1-q^{2r+s-n+1})}\,, \\
&\tcC^{-0}(r,s)= -\frac{(1-q^{n})(1-q^{s})(1-q^{r+s-n-2})(1-q^{r+s-m-1})(1-q^{r+m-n-1})}{(1-q)(1-q^{-m})(1-q^{2r+s-n-2})(1-q^{2r+s-n-3})}   \,, \\
&\tcC^{00}(r,s)
=\frac{(1-q^n)(1-q^{s})}{(1-q)(1-q^m)}\left(q^{\ell}+q^m-q^{s}-q^{s-1}-1+q^{m}\cB^0(r,s)\right)     \,, \\
&\tcC^{-+}(r,s)=-\frac{q^{s-m}(1-q^{n})(1-q^{s+1})(1-q^{r+m-n-1})}{(1-q)(1-q^{-m})(1-q^{2r+s-n-2})}   \,, \\
&\tcC^{0+}(r,s)=\frac{q^{r+s}(1-q^{-n})(1-q^{s+1})(1-q^{r+s-m})}{(1-q)(1-q^{-m})(1-q^{2r+s-n})}   \,, \\
&\tcC^{+-}(r,s)=-\frac{q^{s-m-1}(q^{\ell}-q^{s-1})(1-q^{r+m-n})(1-q^{n})(1-q^{r+1})}{(1-q)(1-q^{-m})(1-q^{2r+s-n})}   \,, \\
& \tcC^{0-}(r,s)=\frac{(q^{\ell}-q^{s-1})(1-q^{n})(1-q^{r+s-m-1})(1-q^{r+s-n-2})}{(1-q)(1-q^{-m})(1-q^{2r+s-n-2})}   \,, \\
& \tcC^{++}(r,s) = \tcC^{--}(r,s)=0 \,.
\end{align}
\end{subequations}
\end{prop}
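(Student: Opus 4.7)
The plan is to mirror the proof of Proposition \ref{prop:recTPpoly}, which combined the explicit expressions of $T_{10}(r,s)$ and $T_{01}(r,s)$ as polynomials in $\lambda(s)$ and $\Lambda(r,s)$ with the recurrence relations of Proposition \ref{prop:recT}. Here, \eqref{eq:U10} displays $U_{10}(i,j)$ as an affine function of $\theta_i$, and \eqref{eq:U01} displays $U_{01}(i,j)$ as a linear combination of $1$, $\theta_i$, and $q^{-i}\theta_j$ --- exactly the quantities that appear on the left-hand sides of the difference equations \eqref{eq:diffT1}--\eqref{eq:diffT2} of Proposition \ref{prop:diffT}. I would therefore first convert Proposition \ref{prop:diffT} into recurrence relations in $(r,s)$ for $U_{rs}(i,j)$ using Wilson duality \eqref{eq:WilsDual}, and then take the appropriate linear combinations.

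For the dualization step, rewriting \eqref{eq:WilsDual} as $T_{ij}(r,s) = T_{ij}(0,0)\, U_{rs}(i,j)/U_{rs}(0,0)$ and substituting into the two equations of Proposition \ref{prop:diffT} lets the common factor $T_{ij}(0,0)$ cancel, producing
\begin{align*}
\theta_i\, U_{rs}(i,j) &= \sum_{\epsilon \in \{0,+1,-1\}} \cB^\epsilon(r,s)\, \frac{U_{rs}(0,0)}{U_{r+\epsilon,s}(0,0)}\, U_{r+\epsilon,s}(i,j), \\
q^{-i}\theta_j\, U_{rs}(i,j) &= \sum_{\epsilon,\epsilon' \in \{0,+1,-1\}} \cC^{\epsilon,\epsilon'}(r,s)\, \frac{U_{rs}(0,0)}{U_{r+\epsilon,s+\epsilon'}(0,0)}\, U_{r+\epsilon,s+\epsilon'}(i,j).
\end{align*}
Multiplying $U_{rs}(i,j)$ by the affine expression \eqref{eq:U10} for $U_{10}(i,j)$ then produces a linear combination of $U_{rs}(i,j)$ and $\theta_i\, U_{rs}(i,j)$, which the first dualized relation expands into $U_{r+\epsilon,s}(i,j)$ with $\epsilon \in \{0,\pm 1\}$, matching the shape of \eqref{eq:diffU1}. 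Multiplying by \eqref{eq:U01} analogously invokes both dualized relations and yields the shape of \eqref{eq:diffU2}.

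The main obstacle will be the explicit evaluation of the normalization ratios $U_{rs}(0,0)/U_{r+\epsilon,s+\epsilon'}(0,0)$ and the verification that the resulting prefactors reproduce the claimed $\tcB^\epsilon(r,s)$ and $\tcC^{\epsilon,\epsilon'}(r,s)$. From \eqref{eq:Urs} specialized at $(i,j)=(0,0)$, one has $U_{rs}(0,0) = \frac{\qbinom{n}{m}}{\qbinom{n-s}{m-s}}\, K_s(m,\ell;q;0)\, Q_r(n-s,m-s;q;0)$, so the ratios reduce to elementary ratios of $q$-binomial coefficients and to specific evaluations at zero of the affine $q$-Krawtchouk and $q$-Hahn polynomials, which follow directly from the definitions \eqref{eq:affKrawscheme} and \eqref{eq:Hahnscheme}. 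Once these ratios are in hand, the identification of coefficients is a direct, if somewhat intricate, algebraic verification; in particular, the shifted denominators such as $(1-q^{2r+s-n})(1-q^{2r+s-n+1})$ appearing in $\tcB^+(r,s)$, compared to the $(1-q^{2r+s-n-1})(1-q^{2r+s-n})$ in $\cB^+(r,s)$, should emerge naturally from the ratio $U_{rs}(0,0)/U_{r+1,s}(0,0)$, and the extra factor $q^{r+s}$ likewise arises from this bookkeeping combined with the shift $1-q^{n-1} = -q^{n-1}(1-q^{-n+1})$ needed to reconcile the two presentations.
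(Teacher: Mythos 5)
Your proposal follows essentially the same route as the paper's proof: dualize the difference equations of Proposition \ref{prop:diffT} via Wilson duality into recurrence relations for $U_{rs}(i,j)$ carrying the normalization ratios $U_{rs}(0,0)/U_{r+\epsilon,s+\epsilon'}(0,0)$, expand the products using the expressions \eqref{eq:U10}--\eqref{eq:U01} of $U_{10}$ and $U_{01}$ in terms of $\theta_i$ and $q^{-i}\theta_j$, and evaluate the ratios explicitly from \eqref{eq:Urs}. This matches the paper's argument, which records the closed form of $U_{rs}(0,0)$ and the three relevant ratios as the intermediate results you anticipate.
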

\begin{proof}
Using the Wilson duality, one can rewrite the difference equations \eqref{eq:diffT1} and \eqref{eq:diffT2} of the polynomials $T_{ij}(r,s)$ as recurrence relations for $U_{rs}(i,j)$ of the following form:
\begin{align}\label{eq:gendiffU}
 \nu(i,j) U_{rs}(i,j)=& \sum_{\epsilon,\epsilon'\in \{0,+1,-1\} }  \cF^{\epsilon,\epsilon'}(r,s) \frac{U_{rs}(0,0)}{U_{r+\epsilon,s+\epsilon'}(0,0)}\ U_{r+\epsilon,s+\epsilon'}(i,j) \,,
\end{align}
where $\nu(i,j)$ is either equal to $\theta_i$ or to $q^{-i}\theta_j$, and the coefficients $\cF^{\epsilon, \epsilon'}$ refer respectively to either coefficients $\cB^{\epsilon}$ or $\cC^{\epsilon, \epsilon'}$ from Proposition \ref{prop:diffT}. The dual eigenvalues $U_{10}(i,j)$ and $U_{01}(i,j)$ are given explicitly in terms of $\theta_i$ and $q^{-i}\theta_j$ in \eqref{eq:U10} and \eqref{eq:U01}. Therefore, $U_{10}(i,j)U_{rs}(i,j)$ and $U_{01}(i,j)U_{rs}(i,j)$ can be computed using both recurrence relations of the form \eqref{eq:gendiffU}. The coefficients $\tcB$ and $\tcC$ of this proposition are obtained by modifying the coefficients $\cB$ and $\cC$ as indicated in \eqref{eq:gendiffU}, and collecting similar terms together. The following intermediate results are useful for this computation and are obtained from \eqref{eq:Urs}:
\begin{align}
&U_{rs}(0,0) =q^{\ell s}(q^{-\ell};q)_s\qbinom{n}{s}\qbinom{n-s}{r}\frac{1-q^{2r+s-n-1}}{1-q^{r+s-n-1}}\,,
\end{align}
which leads to
\begin{align}
&\frac{U_{rs}(0,0)}{U_{r+1,s}(0,0)}=-\frac{q^{r+s-n}(1-q^{r+1})(1-q^{2r+s-n-1})}{(1-q^{r+s-n-1})(1-q^{2r+s-n+1})} \,,\\
&\frac{U_{rs}(0,0)}{U_{r-1,s+1}(0,0)}=\frac{q^{-\ell}(1-q^{s+1})(1-q^{2r+s-n-1})}{(1-q^{s-\ell})(1-q^{r})(1-q^{2r+s-n-2})} \,,\\
&\frac{U_{rs}(0,0)}{U_{r,s+1}(0,0)}=-\frac{q^{r+s-n-\ell}(1-q^{s+1})(1-q^{2r+s-n-1})}{(1-q^{s-\ell})(1-q^{r+s-n-1})(1-q^{2r+s-n})}\,.
\end{align}
\end{proof}

\paragraph{Krein parameters.}
By substituting the second relation of \eqref{eq:eigenvalues} in \eqref{eq:Emnrs}, and using the property $A_{ij}\circ A_{mn} = \delta_{im}\delta_{jn} A_{ij}$, one obtains a correspondence between the coefficients in product formulas for the dual eigenvalues and the Krein parameters of the scheme,
\begin{equation}
U_{mn}(ij)U_{rs}(ij) = \sum_{(a,b) \in \cD^\star} q_{mn,rs}^{ab} U_{ab}(ij) \, .
\end{equation}
Therefore, the following result regarding the non-vanishing Krein parameters can be derived from the product formulas of Proposition \ref{prop:recUQpoly}.
\begin{coro}
The non-vanishing Krein parameters $q_{10,rs}^{ab}$ and $q_{01,rs}^{ab}$ of the association schemes based on attenuated spaces $\A$ are given by
\begin{equation} \label{eq:KreinParam}
q_{10,rs}^{r+\epsilon,s} = \tcB^{\epsilon}(r,s)\, , \qquad q_{01,rs}^{r+\epsilon,s+\epsilon'} = \tcC^{\epsilon,\epsilon'}(r,s) \,, \qquad \text{for } \epsilon,\epsilon' \in \{0,+1,-1\}\,,
\end{equation}
where $\tcB^{\epsilon}(r,s)$ and $\tcC^{\epsilon,\epsilon'}(r,s)$ are the coefficients given in Proposition \ref{prop:recUQpoly}.
\end{coro}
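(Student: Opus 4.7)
My plan is to derive the correspondence between the Krein parameters and the coefficients of Proposition \ref{prop:recUQpoly}, and then read off the explicit values. The key identity to establish is the dual analog of the one used for the intersection numbers: namely
\begin{equation*}
U_{mn}(i,j) U_{rs}(i,j) = \sum_{(a,b) \in \cD^\star} q_{mn,rs}^{ab}\, U_{ab}(i,j),
\end{equation*}
valid for all $(i,j) \in \cD$. Once this is in hand, the statement of the corollary follows by matching, for $(m,n)=(1,0)$ and $(m,n)=(0,1)$, the coefficients on the right-hand side with those produced by the product formulas \eqref{eq:diffU1} and \eqref{eq:diffU2}. Since the sets $\{U_{ab}(i,j)\}$ are linearly independent as functions of $(i,j)$ (they correspond to the distinct primitive idempotents), this matching is unique.

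To prove the boxed identity, I would first substitute the second relation of \eqref{eq:eigenvalues}, i.e.\ $E_{rs} = \frac{1}{|X|}\sum_{(i,j)\in\cD} U_{rs}(i,j) A_{ij}$, into the Schur product expansion \eqref{eq:Emnrs}. Using the Schur idempotency $A_{ij}\circ A_{ab} = \delta_{ia}\delta_{jb} A_{ij}$, the left-hand side becomes
\begin{equation*}
E_{mn}\circ E_{rs} = \frac{1}{|X|^2}\sum_{(i,j)\in\cD} U_{mn}(i,j)\, U_{rs}(i,j)\, A_{ij}.
\end{equation*}
Expanding the right-hand side $\frac{1}{|X|}\sum_{(a,b)\in\cD^\star} q_{mn,rs}^{ab} E_{ab}$ in the $A_{ij}$ basis via the same dual eigenvalue formula, and comparing coefficients of $A_{ij}$, yields the desired product rule.

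Then I would specialize to $(m,n)=(1,0)$ and $(m,n)=(0,1)$. Proposition \ref{prop:recUQpoly} gives
\begin{align*}
U_{10}(i,j)U_{rs}(i,j) &= \sum_{\epsilon}\tcB^{\epsilon}(r,s)\, U_{r+\epsilon,s}(i,j),\\
U_{01}(i,j)U_{rs}(i,j) &= \sum_{\epsilon,\epsilon'}\tcC^{\epsilon,\epsilon'}(r,s)\, U_{r+\epsilon,s+\epsilon'}(i,j).
\end{align*}
Comparing term by term with the boxed identity gives $q_{10,rs}^{r+\epsilon,s} = \tcB^{\epsilon}(r,s)$ and $q_{01,rs}^{r+\epsilon,s+\epsilon'} = \tcC^{\epsilon,\epsilon'}(r,s)$, and all other Krein parameters of the form $q_{10,rs}^{ab}$ or $q_{01,rs}^{ab}$ vanish since they do not appear in the sum.

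The only subtle point, and what I expect to be the main obstacle, is ensuring that the indices $(r+\epsilon,s)$ and $(r+\epsilon,s+\epsilon')$ arising in the recurrences actually lie in $\cD^\star$ (or that the corresponding coefficients $\tcB^\epsilon$, $\tcC^{\epsilon,\epsilon'}$ vanish at the boundary, making the identification consistent). This boundary analysis is analogous to that for Corollary \ref{coro:intnumb} and follows from examining the explicit coefficient formulas, which contain factors such as $(1-q^{r+1})$, $(1-q^{r+s-m})$, $(1-q^{r+m-n})$, etc., that vanish precisely when the shifted indices exit $\cD^\star$.
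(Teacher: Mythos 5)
Your proposal is correct and follows essentially the same route as the paper: substituting the dual eigenvalue expansion of $E_{rs}$ into the Schur product relation \eqref{eq:Emnrs}, using the Schur idempotency of the $A_{ij}$ to obtain the product rule $U_{mn}(i,j)U_{rs}(i,j) = \sum_{(a,b)} q_{mn,rs}^{ab} U_{ab}(i,j)$, and then matching coefficients against Proposition \ref{prop:recUQpoly}. Your added remarks on linear independence of the dual eigenvalue functions and on the boundary vanishing of the coefficients are sound refinements of the same argument.
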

Note that the Krein parameters of the association schemes $\A$ have been obtained in \cite{BKZZ2} with a different approach. Because of different choices of notations, the Krein parameters denoted $q_{mn,rs}^{r+\epsilon,s+\epsilon'}$ in \cite{BKZZ2} correspond to those labeled $q_{nm,sr}^{s+\epsilon',r+\epsilon}$ in the present paper.
\begin{coro}
    The association scheme based on attenuated spaces $\A$ is $\preceq_{(1,0)}$-compatible bivariate $Q$-polynomial on the domain $\cD^\star=\cD$ with respect to $\leq'$ (as in \eqref{eq:deglex1}). 
\end{coro}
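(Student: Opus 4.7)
My plan is to mirror the structure of the proof of the preceding corollary (for the $\preceq_{(0,1)}$-compatible bivariate $P$-polynomial structure), replacing intersection numbers by Krein parameters and Proposition~\ref{prop:biPpoly2} by Proposition~\ref{prop:biQpoly2}. The explicit Krein parameters needed are those provided in \eqref{eq:KreinParam} and Proposition~\ref{prop:recUQpoly}.

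First, I would dispatch the preliminaries: the domain $\cD^\star=\cD$ satisfies condition (i) of Definition~\ref{def:biQpoly} since the defining inequalities $a+b\leq m$, $a\leq n-m$, $b\leq \ell$ in \eqref{eq:domain} are individually monotone in $a$ and in $b$; and the partial order $\preceq_{(1,0)}$ (with $\alpha=1$, $\beta=0$) is compatible with $\leq'$ by the restriction on $(\alpha,\beta)$ stated after \eqref{eq:po}. Recall that $(m_1,m_2)\preceq_{(1,0)}(n_1,n_2)$ means $m_1+m_2\leq n_1+n_2$ and $m_2\leq n_2$. The main body of the argument then reduces to verifying the two conditions in item (ii) of Proposition~\ref{prop:biQpoly2}.

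Next, I would verify the partial order condition. According to \eqref{eq:KreinParam}, the only possibly non-vanishing $q_{10,rs}^{ab}$ occur at $(a,b)=(r+\epsilon,s)$ with $\epsilon\in\{-1,0,+1\}$, and the only possibly non-vanishing $q_{01,rs}^{ab}$ occur at the seven points $(r+\epsilon,s+\epsilon')$ listed in Proposition~\ref{prop:recUQpoly}. A routine case check (also visible in Figure~\ref{fig:Kreinparam}) shows that each of these points satisfies the $\preceq_{(1,0)}$ inequality against $(r+1,s)$ or $(r,s+1)$, respectively; for instance, $(r+1,s)\preceq_{(1,0)}(r,s+1)$ since $r+1+s=r+s+1$ and $s\leq s+1$. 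Then, for the non-vanishing of the leading parameters, I would inspect the explicit expressions for $\tcB^+(r,s)$ and $\tcC^{0+}(r,s)$: whenever $(r+1,s)\in\cD$ one has $r+s<m$, $r+m<n$, $s\leq \ell$ and so every factor $(1-q^X)$ in the numerator of $\tcB^+(r,s)$ is non-zero; similarly when $(r,s+1)\in\cD$ one has $r+s<m$ and $s+1\leq\ell$, which yields $\tcC^{0+}(r,s)\neq 0$. Assembling these verifications and invoking Proposition~\ref{prop:biQpoly2} gives the conclusion.

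The main obstacle I anticipate is not conceptual but bookkeeping: one must check all ten non-vanishing coefficients against the $\preceq_{(1,0)}$ condition and track the sign/vanishing of several $(1-q^X)$ factors on the boundary of $\cD$. A secondary subtlety is that the denominators in $\tcB^\pm$ and $\tcC^{\pm,\mp}$ contain factors such as $(1-q^{2r+s-n})$ and $(1-q^{2r+s-n+1})$ which can formally vanish at boundary points; in such edge cases one should, if needed, reconstruct the Krein parameter from the derivation in Proposition~\ref{prop:recUQpoly} rather than from the simplified formula, to confirm that the leading coefficient really is non-zero and that no listed coefficient spuriously becomes non-zero outside the region displayed in Figure~\ref{fig:Kreinparam}.
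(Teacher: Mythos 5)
Your proposal is correct and follows essentially the same route as the paper: check condition (i) of Definition~\ref{def:biQpoly} for $\cD^\star=\cD$, note the compatibility of $\preceq_{(1,0)}$ with $\leq'$, verify from \eqref{eq:KreinParam} that all non-vanishing $q_{e_k,rs}^{ab}$ satisfy $(a,b)\preceq_{(1,0)}(r,s)+e_k$ and that the leading parameters $\tcB^+(r,s)$, $\tcC^{0+}(r,s)$ are non-zero when the target lies in $\cD^\star$, then invoke Proposition~\ref{prop:biQpoly2}. The denominator worry you raise does not materialize: $(r,s)\in\cD$ with $(r+1,s)$ or $(r,s+1)$ in $\cD$ forces $2r+s\leq n-2$ or $2r+s\leq n-1$ respectively, so the factors $(1-q^{2r+s-n})$ and $(1-q^{2r+s-n+1})$ never vanish there.
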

\begin{proof}
    The domain $\cD^\star=\cD$ as defined in \eqref{eq:domain} satisfies condition (i) of Definition~\ref{def:biQpoly}.
    Note that the partial order $\preceq_{(1,0)}$ is compatible with the monomial order $\leq'$. Direct verification using \eqref{eq:KreinParam} confirms that for $k=1,2$, all pairs $(a,b)$ such that $q_{e_k,rs}^{ab}\neq 0$ satisfy $(a,b) \preceq_{(1,0)} e_k+(r,s)$ (see also Figure \ref{fig:Kreinparam}), 
    and if $e_k+(r,s)\in \cD^\star$ then $q_{e_k,rs}^{e_k+(r,s)}\neq 0$. By Proposition~\ref{prop:biQpoly2}, the result follows. 
\end{proof}
The bivariate polynomials $v^\star_{n}$ of Definition \ref{def:biQpoly} satisfy
\begin{equation}
U_{rs}(i,j)=v^\star_{rs}(U_{10}(i,j),U_{01}(i,j))\,, 
\end{equation} 
and hence 
\begin{align*}
    &xv^\star_{r,s}(x,y)=\sum_{\epsilon\in \{0,+1,-1\} }  \tcB^{\epsilon}(r,s)\ v^\star_{r+\epsilon,s}(x,y)\, , \\
    &xv^\star_{r,s}(x,y)=\sum_{\epsilon,\epsilon' \in \{0,+1,-1\}}  \tcC^{\epsilon,\epsilon'}(r,s)\ v^\star_{r+\epsilon,s+\epsilon'}(x,y)\,. 
\end{align*}
All the bivariate polynomials $v^\star_{r,s}(x,y)$ can be constructed from the initial conditions $v^\star_{00}=1$, $v^\star_{10}=x$ and $v^\star_{01}=y$ and the above recurrence relations. 

\section{Subconstituent algebra} \label{sec:subconstituent}

In this section, we explore the subconstituent algebra \cite{ter1,ter2,ter3} of the association schemes based on attenuated spaces following an approach similar to the one adopted for the non-binary Johnson schemes in \cite{CVZZ}. 

Recall that the vector space spanned by the adjacency matrices $A_{ij}$ of the scheme forms the commutative algebra $\cM$ under matrix product, with structure constants given by the intersection numbers $p_{ij,rs}^{ab}$. It is possible to construct an analogous algebra, called dual Bose--Mesner algebra, from a set of matrices derived from the idempotents $E_{rs}$,  with structure constants given by the Krein parameters $q_{ij,rs}^{ab}$; this goes as follows. 

Let $\mathbf{x}$ be a fixed element of the vertex set $X$ of the association scheme $\A$. For $(r,s)\in \cD^*$, the dual adjacency matrix $A^\star_{rs}$ (with respect to $\mathbf{x}$) is defined as the diagonal matrix with entries
\begin{align}\label{eq:Astar}
(A^\star_{rs})_{\mathbf{y}\mathbf{y}}&=|X| (E_{rs})_{\mathbf{x}\mathbf{y}}\,, \qquad \text{for all } \mathbf{y} \in X\,.
\end{align}
The vector space spanned by the dual adjacency matrices $A^\star_{rs}$ forms a commutative algebra with
\begin{equation}
 A^\star_{ij}  A^\star_{rs} = \sum_{(a,b)\in\cD^\star} q_{ij,rs}^{ab} A^\star_{ab} \,.
\end{equation}
This is the dual Bose--Mesner algebra (with respect to $\mathbf{x}$), that we denote $\cM^\star$.
For $(i,j)\in \cD$, the dual primitive idempotent $E_{ij}^\star$ (with respect to $\mathbf{x}$) is defined as the diagonal matrix with entries
\begin{align}
(E^\star_{ij})_{\mathbf{y}\mathbf{y}} &=(A_{ij})_{\mathbf{x}\mathbf{y}}\,, \qquad \text{for all } \mathbf{y} \in X\,.
\end{align}
These dual idempotents $E^\star_{ij}$ form another basis of $\cM^\star$ and satisfy
\begin{equation}
\sum_{(i,j)\in \cD} E_{ij}^\star = \II\,, \qquad E_{ij}^\star E_{mn}^\star = \delta_{im}\delta_{jn} E_{ij}^\star\,. 
\end{equation}
The dual adjacency matrices and dual idempotents are related as follows:
\begin{equation}
    A^\star_{rs} = \sum_{(i,j) \in \cD} U_{rs}(i,j) E_{ij}^\star\,, \qquad E^\star_{ij} =\frac{1}{|X|}\sum_{(r,s) \in \cD^\star} T_{ij}(r,s) A^\star_{rs}\,. 
\end{equation}

The subconstituent algebra (with respect to $\mathbf{x}$), also called the Terwilliger algebra, is defined as the algebra generated by $A_{ij}$ and $A^\star_{rs}$ for $(i,j)\in\cD$ and $(r,s)\in\cD^\star$. It is in general a non-commutative algebra, since the elements of the subalgebra $\cM$ do not necessarily commute with the elements of the subalgebra $\cM^\star$. The following lemma provides some useful relations which hold in the subconstituent algebra of an association scheme.
\begin{lem}\label{prop:EAE} \cite[Lemma 3.2]{ter1} For any fixed element $\mathbf{x} \in X$ we have
    \begin{align}
        & E_{ij}^\star A_{mn} E_{rs}^\star = 0 \quad \Leftrightarrow \quad p_{ij,mn}^{rs} =0\,, \quad (i,j),(m,n),(r,s) \in \cD \\
        & E_{ij} A_{mn}^\star E_{rs} = 0 \quad \Leftrightarrow \quad q_{ij,mn}^{rs} =0\,, \quad (i,j),(m,n),(r,s) \in \cD^\star\,. 
    \end{align}
\end{lem}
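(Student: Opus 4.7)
My plan is to prove both equivalences by computing matrix entries of the triple products directly, exploiting the diagonal structure of $E_{ij}^\star$, $E_{rs}^\star$, and $A^\star_{mn}$.

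For part (i), using $(E_{ij}^\star)_{\mathbf{y}\mathbf{y}} = (A_{ij})_{\mathbf{x}\mathbf{y}}$ and $(E_{rs}^\star)_{\mathbf{z}\mathbf{z}} = (A_{rs})_{\mathbf{x}\mathbf{z}}$, the triple product simplifies entry-wise to
\begin{equation*}
(E_{ij}^\star A_{mn} E_{rs}^\star)_{\mathbf{y}\mathbf{z}} = (A_{ij})_{\mathbf{x}\mathbf{y}}\,(A_{mn})_{\mathbf{y}\mathbf{z}}\,(A_{rs})_{\mathbf{x}\mathbf{z}}\,,
\end{equation*}
which equals $1$ exactly when $(\mathbf{x},\mathbf{y}) \in R_{ij}$, $(\mathbf{y},\mathbf{z}) \in R_{mn}$, and $(\mathbf{x},\mathbf{z}) \in R_{rs}$ all hold, and $0$ otherwise. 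Fixing any $\mathbf{z}$ with $(\mathbf{x},\mathbf{z}) \in R_{rs}$ (such $\mathbf{z}$ exists since $R_{rs}$ is non-empty for $(r,s)\in\cD$), the number of $\mathbf{y}$ satisfying the other two conditions is by definition $p_{ij,mn}^{rs}$. Hence $E_{ij}^\star A_{mn} E_{rs}^\star = 0$ if and only if $p_{ij,mn}^{rs} = 0$.

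For part (ii), the strategy is to compute the squared Frobenius norm $\mathrm{tr}(FF^\top)$ of $F := E_{ij}A^\star_{mn}E_{rs}$ and show it is a positive multiple of a single Krein parameter. Using the cyclic property of the trace and $E_{ij}^2=E_{ij}$, one has $\mathrm{tr}(FF^\top) = \mathrm{tr}(A^\star_{mn} E_{rs} A^\star_{mn} E_{ij})$. Writing this as a double sum over vertices, substituting $(A^\star_{mn})_{\mathbf{w}\mathbf{w}}=|X|(E_{mn})_{\mathbf{x}\mathbf{w}}$, and recognizing the factor $(E_{rs})_{\mathbf{y}\mathbf{w}}(E_{ij})_{\mathbf{w}\mathbf{y}}$ as the Schur-product entry $(E_{rs}\circ E_{ij})_{\mathbf{y}\mathbf{w}}$, the expansion of $E_{rs}\circ E_{ij}$ via \eqref{eq:Emnrs} together with the orthogonality $E_{mn}E_{cd}=\delta_{mc}\delta_{nd}E_{mn}$ collapses the quadruple sum into
\begin{equation*}
\mathrm{tr}(FF^\top) = |X|\,(E_{mn})_{\mathbf{x}\mathbf{x}}\,q_{rs,ij}^{mn}\,.
\end{equation*}
The prefactor $|X|(E_{mn})_{\mathbf{x}\mathbf{x}}$ is strictly positive (the diagonal entries of any primitive idempotent of a symmetric scheme equal the multiplicity divided by $|X|$). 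Since $F$ is real-valued, $F=0$ if and only if this trace vanishes, i.e.\ if and only if $q_{rs,ij}^{mn}=0$. Combined with the classical Krein symmetry relating $q_{ij,mn}^{rs}$ and $q_{rs,ij}^{mn}$ (valid for any commutative symmetric scheme, and hence for $\A$), and the positivity of the multiplicities, this yields $F=0 \iff q_{ij,mn}^{rs}=0$.

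The main technical obstacle is the clean execution of the trace manipulation: the pivotal step is the identification of the Schur product $E_{rs}\circ E_{ij}$ within the quadruple sum, which is what brings the Krein parameters into play via \eqref{eq:Emnrs}. Once this identification is made, the remaining steps are routine applications of the orthogonality of the primitive idempotents together with the standard Krein symmetry.
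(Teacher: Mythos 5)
Your proof is correct. Note that the paper does not prove this lemma at all --- it is quoted verbatim from Terwilliger's subconstituent algebra paper \cite[Lemma 3.2]{ter1} --- so there is no internal proof to compare against; your argument (the entrywise count of triples for the intersection-number statement, and the Frobenius-norm computation $\mathrm{tr}(FF^\top)=|X|(E_{mn})_{\mathbf{x}\mathbf{x}}\,q_{rs,ij}^{mn}$ combined with the symmetry $m_c\,q_{ab}^{c}=m_b\,q_{ac}^{b}$ for the Krein statement) is essentially the standard proof given in that reference, and all the steps check out, including the two points that require care: the positivity of the valency guaranteeing a $\mathbf{z}$ with $(\mathbf{x},\mathbf{z})\in R_{rs}$ exists, and the positivity of the multiplicities needed to pass from $q_{rs,ij}^{mn}=0$ to $q_{ij,mn}^{rs}=0$.
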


In the univariate case, the subconstituent algebra of a $P$- and $Q$-polynomial association scheme is generated by two elements $A,A^\star$ which satisfy the tridiagonal relations \cite{ter3}. For a bivariate $P$-polynomial association scheme, the Bose--Mesner algebra is generated by $A_{10}$ and $A_{01}$ because of \eqref{eq:polyA}. Similarly, for a bivariate $Q$-polynomial association scheme, one deduces from \eqref{eq:polyE} and \eqref{eq:Astar} that 
\begin{equation}
 A^\star_{mn}=v^\star_{mn}( A^\star_{10}, A^\star_{01}),
\end{equation}
hence the dual Bose--Mesner algebra is generated by $A^\star_{10}$ and $A^\star_{01}$. It follows that the subconstituent algebra of a bivariate $P$- and $Q$-polynomial association scheme such as $\A$ is generated by the four elements $A_{10}, A_{01}, A^\star_{10}, A^\star_{01}$, with $[A_{10},A_{01}]=[A^\star_{10},A^\star_{01}]=0$. In the next proposition, we obtain additional relations satisfied by the generators.
\begin{prop}\label{prop:subconstituent}
The following relations hold in the subconstituent algebra of the association schemes based on attenuated spaces $\A$.
The elements $A_{01}$ and $A^\star_{10}$ commute:
\begin{equation}\label{eq:relsub1}
[A_{01},A^\star_{10}]=0\,.
\end{equation}
The elements $A_{10}$ and $A^\star_{10}$ satisfy the tridiagonal relations
\begin{align}
&[A_{10},(q+q^{-1})A_{10}A^\star_{10}A_{10}-\{(A_{10})^2,A^\star_{10}\}+\gamma\{A_{10},A^\star_{10}\}+\rho A^\star_{10}]=0\,, \label{eq:relsub1}\\
&[A^\star_{10},(q+q^{-1})A^\star_{10}A_{10}A^\star_{10}-\{(A^\star_{10})^2,A_{10}\}+(1-q)\chi\{A^\star_{10},A_{10}\}+q\chi^2A_{10}]=0\,, \label{eq:relsub2}
\end{align}
where the elements $\gamma,\rho,\chi$ commute with $A_{10},A_{10}^\star$ and are given by
\begin{align}
\begin{split}
&\gamma=(1-q)\eta_1 +q^{\ell-1} \left(1+q^{n-m+1}\right)\,, \quad \rho=q\eta_1(\eta_1+\eta_0)+\frac{q^{2\ell-1} \left(1-q^{n-m+2}\right) \left(1-q^{n-m}\right)}{(1-q)^2}\,, \\
&\text{with} \quad \eta_0=q^{\ell-1}\left(\frac{2\left(1-q^{n-m-1}\right)}{1-q}+q^{n-m-1}(1-q)\right), \quad \eta_1= -\frac{1-q^m}{1-q}-A_{01}\,,
\end{split}\label{eq:gamrho}
\\
&\chi=q^{-m}\left(\frac{1-q^{n-1}}{1-q^{n-m}}\right)\left(1-q^m-\frac{1-q^n}{1-q^m}\right).
\end{align}
The elements $A_{01}$ and $A^\star_{01}$ satisfy the tridiagonal relations
\begin{align}
&[A_{01},(q+q^{-1})A_{01}A^\star_{01}A_{01}-\{(A_{01})^2,A^\star_{01}\}+(1-q)\zeta\{A_{01},A^\star_{01}\}+q\zeta^2A^\star_{01}]=0\,, \label{eq:relsub3}\\
&[A^\star_{01},(q+q^{-1})A^\star_{01}A_{01}A^\star_{01}-\{(A^\star_{01})^2,A_{01}\}+(1-q)\xi\{A^\star_{01},A_{01}\}+q\xi^2A_{01}]=0\,, \label{eq:relsub4}
\end{align}
where the elements $\zeta,\xi$ commute with $A_{01},A_{01}^\star$ and are given by
\begin{align}
&\zeta=q^{-1}(1-q^m-q^\ell)\,, \\
&\xi=q^{-1}\left(1-q^m-\frac{q^\ell \left(1-q^n\right)}{1-q^m}\right)
+q^{m-2}(1-q)\frac{ 1-q^{n-m}}{1-q^{n-1}}A^\star_{10}\,.
\end{align}
\end{prop}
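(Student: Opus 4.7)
The plan is to use the ``sandwich'' technique based on Lemma \ref{prop:EAE}, combined with the diagonal decomposition $\II = \sum_{(i,j) \in \cD} E^\star_{ij}$. For each relation claimed, I would sandwich both sides between arbitrary dual idempotents $E^\star_{i_1 j_1}$ and $E^\star_{i_2 j_2}$ and use $A^\star_{ab}\,E^\star_{ij} = U_{ab}(i,j)\,E^\star_{ij}$ to reduce operator identities to algebraic identities in dual eigenvalues and intersection numbers.

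The first relation $[A_{01},A^\star_{10}]=0$ is the easiest. Sandwiching yields
\begin{equation*}
E^\star_{i_1 j_1}\,[A_{01},A^\star_{10}]\,E^\star_{i_2 j_2} = \bigl(U_{10}(i_2,j_2)-U_{10}(i_1,j_1)\bigr)\,E^\star_{i_1 j_1}\,A_{01}\,E^\star_{i_2 j_2}.
\end{equation*}
By Lemma \ref{prop:EAE} the right factor vanishes unless $p_{01,i_1 j_1}^{i_2 j_2}\neq 0$, which by Corollary \ref{coro:intnumb} forces $i_1=i_2$. The formula \eqref{eq:U10} then shows that $U_{10}$ depends only on the first index, so the prefactor also vanishes in that case. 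As a bonus, this guarantees that the operator-valued coefficients $\gamma, \rho, \xi$ of the proposition (polynomials in $A_{01}$ or $A^\star_{10}$) indeed commute with the two generators appearing in their respective brackets, justifying a posteriori that these brackets are well-defined.

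For the tridiagonal relations between $(A_{10},A^\star_{10})$ I would run the same sandwich argument, now producing cubic identities in the eigenvalues. Because $A^\star_{10}\,E^\star_{ab}=U_{10}(a)\,E^\star_{ab}$ with $U_{10}(a)$ affine in $\theta_a = q^{-a}-1$, and because $A_{10}$ shifts the first index by at most $\pm 1$ (Corollary \ref{coro:intnumb}), the identity reduces to the classical Askey--Wilson three-term relation $\theta_{a+1}+\theta_{a-1}-(q+q^{-1})\theta_a = (q^{1/2}-q^{-1/2})^2$, which fixes the quadratic terms. The residual dependence on the second index collects into a polynomial in $A_{01}$, producing the expressions \eqref{eq:gamrho} for $\gamma$ and $\rho$. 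The same procedure with $(A_{01},A^\star_{01})$ in place of $(A_{10},A^\star_{10})$ establishes the second pair of tridiagonal relations: $U_{01}(i,j)$ is affine in $(\theta_i, q^{-i}\theta_j)$ by \eqref{eq:U01}, and the coupling to $A^\star_{10}$ accounts for the second summand of $\xi$. As a cross-check, these relations correspond exactly to the Askey--Wilson relations \eqref{eq:bial3}--\eqref{eq:bial6} of the bispectral algebra of Section \ref{ssec:bispalg}, with the affine transforms \eqref{eq:transformations} matching $X, Y, X^\star, Y^\star$ to affine images of $A_{01}, A_{10}, A^\star_{01}, A^\star_{10}$, respectively; this fixes the expected form of $\gamma, \rho, \chi, \zeta, \xi$ in advance.

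The main obstacle will be the bookkeeping in the cubic sandwiches: intermediate states in a product like $E^\star_{i_1 j_1}\,A_{10}^2 A^\star_{10}\,E^\star_{i_2 j_2}$ must be summed over a two-dimensional polygon of reachable labels rather than a one-dimensional chain, and matching the resulting $q$-rational expressions with the claimed polynomial coefficients in $A_{01}$ and $A^\star_{10}$ requires careful use of the explicit formulas in Propositions \ref{prop:recTPpoly} and \ref{prop:recUQpoly}. The conceptual content, however, is entirely captured by the Askey--Wilson three-term structure of the dual spectrum, which is why the final answer admits such a compact form.
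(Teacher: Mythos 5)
Your argument for $[A_{01},A^\star_{10}]=0$ is correct and is the exact dual of the paper's: the paper inserts $\II=\sum E_{ij}$ and invokes the Krein parameters $q_{10,ab}^{cd}$, while you insert $\II=\sum E^\star_{ij}$ and invoke the intersection numbers $p_{01,ij}^{ab}$; both routes rest on Lemma \ref{prop:EAE} plus the fact that $U_{10}$ depends only on the first index (resp.\ $T_{01}$ only on the second). Your single-decomposition strategy also succeeds for the two tridiagonal relations that are \emph{cubic in a starred generator}, namely \eqref{eq:relsub2} and \eqref{eq:relsub4}: those are linear in $A_{10}$ (resp.\ $A_{01}$), so writing that one factor as $\sum E^\star_{ab}A_{10}E^\star_{cd}$ turns the whole bracket into a sum of terms with scalar coefficients $\bigl(U_{10}(a)-U_{10}(c)\bigr)\,Q\bigl(U_{10}(a),U_{10}(c)\bigr)$, the support condition $|a-c|\le 1$ follows from Corollary \ref{coro:intnumb}, and the $q$-exponential identity you quote kills the $|a-c|=1$ terms.

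The plan breaks down, however, for the two relations that are cubic in an \emph{unstarred} generator (the first tridiagonal relation between $A_{10}$ and $A^\star_{10}$, and \eqref{eq:relsub3}). Sandwiching these between dual idempotents does not linearize anything: a term such as $E^\star_{i_1j_1}A_{10}^2A^\star_{10}E^\star_{i_2j_2}=U_{10}(i_2)\sum_{(a,b)}E^\star_{i_1j_1}A_{10}E^\star_{ab}A_{10}E^\star_{i_2j_2}$ involves triple products that are \emph{not} determined by the intersection numbers and dual eigenvalues (they encode triple intersection numbers), the summands for different $(a,b)$ are not linearly independent, and the sum does not cancel coefficient-by-coefficient. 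This is not bookkeeping; with only the data computed in the paper these sums cannot even be evaluated. The fix, which is what the paper does, is to switch decompositions for these two relations: insert the primitive idempotents $E_{rs}$, on which $A_{10}$ and $A_{01}$ act by the scalars $\tau_{rs}=T_{10}(r,s)$ and $\mu_s=T_{01}(r,s)$, so the bracket becomes $\sum(\tau_{ab}-\tau_{cd})P(\tau_{ab},\tau_{cd};\mu_b)\,E_{ab}A^\star_{10}E_{cd}$ with genuinely scalar coefficients; the support of $E_{ab}A^\star_{10}E_{cd}$ is now governed by the Krein parameters ($c=a+\epsilon$, $d=b$, see Figure \ref{fig:recq10}), and it is precisely the preserved index $d=b$ that lets the operator coefficients $\gamma,\rho$ (which contain $A_{01}$) be replaced by the scalars obtained upon substituting $\mu_b$ for $A_{01}$ — a substitution that is unavailable in the dual-idempotent picture, where $A_{01}$ is not diagonal.
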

\begin{proof}
Let us write the eigenvalues \eqref{eq:T10}--\eqref{eq:T01} and \eqref{eq:U10}--\eqref{eq:U01} as
\begin{equation}
\tau_{rs}=T_{10}(r,s)\,, \quad \mu_{s}=T_{01}(r,s)\,, \quad \tau^\star_{i}=U_{10}(i,j)\,, \quad \mu^\star_{ij}=U_{01}(i,j)\,.
\end{equation}
Note that $T_{01}(r,s)$ depends only on $s$ and $U_{10}(i,j)$ depends only on $i$, hence the notations $\mu_{s}$ and $\tau^\star_{i}$. By inserting the identity as a sum over the idempotents $E_{ij}$ on both sides and using the relation $A_{01}E_{ij}=\mu_{j}E_{ij}$, one finds
\begin{equation}
[A_{01},A^\star_{10}] = \sum_{(a,b),(c,d) \in \cD^\star}(\mu_b-\mu_d)E_{ab}A^\star_{10} E_{cd}\,.
\end{equation}
Using the values of the Krein parameters $q_{10,ab}^{cd}$, one deduces that the matrix $E_{ab}A^\star_{10} E_{cd}$ vanishes if $b\neq d$ (see Figure \ref{fig:recq10}). In the case where $b=d$, the factor $(\mu_b-\mu_d)$ vanishes. Therefore the previous sum is zero. 

Now we prove relation \eqref{eq:relsub1} by summing over the idempotents $E_{ij}$ again and using the relation
$A_{10}E_{ij}=\tau_{ij}E_{ij}$ (noting that the elements $\gamma,\rho$  commute with $E_{ij}$,  $A_{10}$ and $A^\star_{10}$):
\begin{align} \label{eq:A101}
&[A_{10},(q+q^{-1})A_{10}A^\star_{10}A_{10}-\{(A_{10})^2,A^\star_{10}\}+\gamma\{A_{10},A^\star_{10}\}+\rho A^\star_{10}] \nonumber \\
&\qquad =\sum_{(a,b),(c,d) \in \cD^\star}((q+q^{-1})\tau_{ab}\tau_{cd}-\tau_{ab}^2-\tau_{cd}^2+\gamma(\tau_{ab}+\tau_{cd})+\rho)(\tau_{ab}-\tau_{cd})E_{ab}A^\star_{10} E_{cd}\,,
\end{align} 
where in the second line, the occurrences of $A_{01}$ in $\gamma$ and $\rho$ are replaced by $\mu_b$. 
The only non-vanishing Krein parameters $q_{10,ab}^{cd}$ are those with $c=a-1,a,a+1$ and $d=b$ (see Figure \ref{fig:recq10}). The factor $(\tau_{ab}-\tau_{cd})$ in the previous sum obviously vanishes if $(a,b)=(c,d)$. 
Therefore we only need to consider the cases with $c=a\pm1$ and $d=b$. 
With the values of the elements $\gamma,\rho$ given as in \eqref{eq:gamrho}, the first factor in each of such summand indeed vanishes.  
Note that it is apparent that $\gamma$ should be of maximum degree 1 in $\mu_b$ and $\rho$ should be of maximum degree 2 in order for the first factor in the summand to possibly vanish. 

A similar approach can be used to prove relation \eqref{eq:relsub3}. In this case, we have an expression similar to \eqref{eq:A101}, with $\gamma$ and $\rho$ replaced by $(1-q)\zeta$ and $q\zeta^2$, respectively, $\tau_{ab}$ and $\tau_{cd}$ replaced by $\mu_b$ and $\mu_d$, and $E_{ab}A^\star_{10}E_{cd}$ replaced by $E_{ab}A^\star_{01}E_{cd}$. 
The other two relations are proved similarly, by multiplying $\mathbb{I}=\sum_{(r,s)\in \cD}E^\star_{rs} $ on both sides of the equations.
\end{proof}

\begin{rem}
Up to affine transformations, the operators $X,Y,X^\star,Y^\star$ acting on the vector space $W$ spanned by the polynomials $\{T_{ij}\}_{(i,j)\in\cD}$, as defined in Subsection \ref{ssec:bispalg}, form a representation of the matrices $A_{01},A_{10},A_{01}^\star,A_{10}^\star$. In other words, there is an algebra homomorphism from the subconstituent algebra to the bispectral algebra. The precise mappings of generators can be deduced from equation \eqref{eq:transformations} giving $X,Y,X^\star,Y^\star$ in terms of the bispectral operators $\cX,\cY,\cX^\star,\cY^\star$, which are defined by their actions \eqref{eq:XYs}--\eqref{eq:XY} on $T_{ij}$, and from equations \eqref{eq:T10}--\eqref{eq:T01} expressing the eigenvalues $T_{10}(r,s),T_{01}(r,s)$ in terms of $\lambda(s),q^{-s}\Lambda(r,s)$. Note that this representation corresponds to the one on the primary module \cite{ter1,TerCourse}. See also \cite{CVZZ} for a more detailed explanation in case of the non-binary Johnson schemes.  
\end{rem}

\begin{rem}
The notion of an $A_M$ multivariate $P$- and $Q$-polynomial association scheme on a domain $\cD=\{(n_1,n_2,\dots,n_M)\in\NN^M \ | \ n_1+n_2+\dots +n_M \leq N\}$ has recently been introduced in \cite{BKZZ2}. It is shown in \cite{BKZZ2} that the subconstituent algebra of such schemes, when acting on the primary module, has the structure of an $A_M$-Leonard pair \cite{IT}. In particular, the association schemes based on attenuated spaces are $A_2$ bivariate $P$- and $Q$-polynomial if their domain is triangular, \textit{i.e.}\ for $m \leq \min(n-m, \ell)$. Knowing that the subconstituent algebra on the primary module is equivalent to the bispectral algebra in Subsection \ref{ssec:bispalg}, we deduce that it has more precisely the structure of a factorized $A_2$-Leonard pair, see Definition 3.3 of \cite{CZ}. Indeed, this structure corresponds to one of the examples treated in \cite{CZ}. 
\end{rem}

\section{Relation to the non-binary Johnson scheme} \label{sec:limit}

The parameter $r$ for the non-binary Johnson scheme (see its definition below) is the size of the alphabet for the scheme. 
The parameter $q$ for association schemes based on attenuated spaces is the size of the underlying finite field, hence a prime power. Here we assume $q=p^h$ for some prime number $p$. 

It is asserted in the literature that association schemes based on attenuated spaces offer $q$-analogs of the non-binary Johnson schemes. Here we confirm this observation by first offering a combinatorial argument to embed certain non-binary Johnson schemes (when $r=q^\ell+1$ for some nonnegative integer $\ell$) into association schemes based on attenuated spaces.  
Second, we note that the parameters (such as the eigenvalues, dual eigenvalues, or cardinality of the vertex set) of association schemes based on attenuated spaces are well-defined functions for any $h\in \mathbb{R}$. Now with the parameter $\ell$ written in the form $\ell=\log_p(r-1)/h$, connecting the parameters $r$ and $h$ of the two schemes, we see that upon taking the limit $h\rightarrow0$ ($q\rightarrow 1$), all the relevant parameters of association schemes based on attenuated spaces approach the corresponding parameters of the non-binary Johnson schemes. 

\subsection{Embedding of non-binary Johnson schemes}

Here we provide a combinatorial argument in support of the assertion that the scheme $\A$ is a $q$-analog of the non-binary Johnson scheme by showing that there exists an embedding of the latter in $\A$ if $r-1=q^\ell$. First we review the definition of non-binary Johnson scheme. 

Let $ K = \{0,1, \dots, r-1\}$ be an alphabet of cardinality $r>2$ and consider its $n$-th Cartesian power $K^n$. For $\vx\in K^n$, the weight $w(\vx)$ is defined to be the number of its nonzero components. Fix a positive integer $m$ and let $\mathfrak{X}$ be the subset of $K^n$ consisting of elements of weight $m$:
\begin{equation}
	\mathfrak{X} = \{ \vx \in K^n \ | \ w(\vx)=m \}\,. 
\end{equation} 
Then $\mathfrak{X}$ has cardinality
\begin{equation}
   |\mathfrak{X}| = \binom{n}{m}(r-1)^m \, . 
\end{equation}
The set of non-empty binary relations on $\mathfrak{X}$ 
\begin{equation}
    \mathfrak{R}_{ij} = \{(\vx ,\vy) \in  \mathfrak{X}\times  \mathfrak{X} \ |\ e(\vx ,\vy)  = m-i-j, \ c(\vx ,\vy)  = m-i\}
\end{equation}
with 
\begin{equation}
    e(\vx ,\vy) = |\{ i\  | \ \vx_i = \vy_i \neq 0\}|\,, \quad  c(\vx ,\vy) = |\{ i \ |\  \vx_i \neq 0, \  \vy_i \neq 0\}|
\end{equation}
forms a symmetric association scheme on $\mathfrak{X}$, called the non-binary Johnson scheme \cite{Dun,TAG}, denoted here by $J_{r}(n,m)$.  

Now, given an association scheme based on attenuated spaces $\A$, consider the non-binary Johnson scheme with  $r = q^\ell + 1$ so that $K\backslash \{0\}$ and the subspace $\mathbf{w} \subset \mathbb{F}_q^{n+\ell}$ of dimension $\ell$ share the same cardinality.  In particular, one can define a bijection $\phi: K\backslash \{0\} \rightarrow \mathbf{w}$ between these two sets. Let us further fix a set of $n$ linearly independent vectors  $\{f_i\}_{1\leq i \leq n}$ in $\mathbb{F}_q^{n+\ell}$ whose span  intersects $\mathbf{w}$ trivially. We can then define the following map $\varphi: \mathfrak{X} \rightarrow X$, from the vertex set $\mathfrak{X}$ of $J_{r}(n,m)$ to the vertex set $X$ of $\A$,
\begin{equation}
    \varphi(\vx) = \text{span}\{ f_i + \phi(\vx_i)  \ | \ \forall \ i  \text{ such that}\ \vx_i \neq 0\}\,.
\end{equation}
Two notable observations can be made on $\varphi$. First, stemming from the linear independence of the vectors $f_i$ and the injective nature of $\phi$, it follows that $\varphi$ is likewise injective. Second, we note that
\begin{equation}
  \dim(( \varphi(\vx)+\mathbf{w})/\mathbf{w} \cap ( \varphi(\vy)+\mathbf{w})/\mathbf{w} ) = c( \vx, \vy)\,,
\end{equation}
\begin{equation}
 \dim( \varphi(\vx) \cap  \varphi(\vy)) = e(\vx, \vy)\,,
\end{equation}
so that $\varphi$ maps pairs of elements in $\mathfrak{X}$ in relation $\mathfrak{R}_{ij}$  to pairs of elements in $X$ in relation $R_{ij}$, \textit{i.e.}
\begin{equation}
    (\vx, \vy) \in \mathfrak{R}_{ij} \Rightarrow (\varphi(\vx), \varphi(\vy)) \in R_{ij}\,.
\end{equation}
Hence, one concludes that $\varphi$ indeed provides an embedding of $J_{r}(n,m)$ in $\A$. 

\subsection{Limit for the polynomial structure}

An alternative way to talk of $q$-analogs for association schemes is by considering the $q \rightarrow 1$ limit of its defining quantities. 
Putting aside the combinatorial meaning of the parameters such as $q=p^h$ or $\ell$ of association schemes based on attenuated spaces, we see 
from equations \eqref{eq:cardinality} and \eqref{eq:affKrawscheme}--\eqref{eq:Urs} that the cardinality $|X|$, the eigenvalues $T_{ij}$,  and the dual eigenvalues $U_{ij}$ are well defined functions for any $h,\ell\in \mathbb{R}$. 
Here we show that for special parametrization of $\ell$, connecting the parameters $r$ and $h$ of the two schemes, all the functions such as $T_{ij}$ of association schemes based on attenuated spaces approach the ones of the non-binary Johnson schemes.

First recall that the eigenvalues and dual eigenvalues of $J_r(n,m)$ are respectively given by (see \cite{TAG,CVZZ})
\begin{align}
&\tT_{ij}(x,y) = (r-1)^j\tK_{i}(m-j,r-1,x)\tE_{j}(n-x,m-x,y)\,, \label{eq:tTij} \\
&\tU_{ij}(x,y) = \frac{\binom{n}{i}}{\binom{m}{i}}\tK_{i}(m-y,r-1,x)\tQ_{j}(n-i,m-i,y)\,, \label{eq:tUij}
\end{align}
where 
\begin{align}
&\tK_{i}(m-j,r-1,x) = \binom{m-j}{i}(r-2)^i \ \hat{K}_i(x;(r-2)/(r-1),m-j)\,,\\
&\tE_{j}(n-x,m-x,y) = \binom{m-x}{j} \binom{n-m}{j} R_j(\lambda(y);-m+x-1,-n+m-1,n-m)\,,\\
&\tQ_j(n-i,m-i,y) =  \left(\binom{n-i}{j}-\binom{n-i}{j-1}\right)R_y(\lambda(j);-m+i-1,-n+m-1,n-m)\,,
\end{align}
with $\hat{K}_i(x)$ the Krawtchouk polynomials and $R_j(\lambda(y))$ the dual-Hahn polynomials (see Appendix \ref{app:notations}). 

Let $\ell=\log_p(r-1)/h$, where $q=p^h$, then 
\begin{align}
    &\lim_{h\to 0} q =\lim_{h\to 0} p^h =1\, , \\
    &\lim_{h\to 0} \qbinom{n}{m}=\binom{n}{m}\, ,\\
    &\lim_{h\to 0} q^\ell=\lim_{h\to 0} p^{h \log_p(r-1)/h}=r-1\,.
\end{align}
With these limits, it can be shown straightforwardly from the expressions \eqref{eq:cardinality}, and 
\eqref{eq:affKrawscheme}--\eqref{eq:Hahnscheme} that:
\begin{align}
& \lim_{ h \to 0} |X| =  \lim_{h \to 0} \qbinom{n}{m}q^{\ell m} =  \binom{n}{m}(r-1)^m  = |\mathfrak{X}|\, , \\
&\lim_{h\to 0} K_i(m-j,\ell;q;x)= \tK_{i}(m-j,r-1,x)\,,\\
&\lim_{h\to0} E_j(n-x,m-x;q;y) = \tE_{j}(n-x,m-x,y)\,,\\
&\lim_{h\to0} Q_j(n-i,m-i;q;y) = \tQ_j(n-i,m-i,y)\,.
\end{align}
Therefore one deduces from \eqref{eq:Tij}, \eqref{eq:Urs}, \eqref{eq:tTij} and \eqref{eq:tUij} that
\begin{align}
\lim_{h\to 0}T_{ji}(y,x)= \tT_{ij}(x,y)\,, \qquad \lim_{h\to 0}U_{ji}(y,x)= \tU_{ij}(x,y)\,,
\end{align}
that is, that the cardinality, the eigenvalues and the dual eigenvalues of the association schemes based on attenuated spaces $\A$ correspond respectively in the limit $q^\ell \to r-1$ and $q\to1$ to the cardinality, the eigenvalues and the dual eigenvalues of the non-binary Johnson schemes $J_r(n,m)$. Other parameters of $J_r(n,m)$ can be similarly recovered from those of $\A$, such as the intersection numbers, the Krein parameters and the relations of the subconstituent algebra (for the latter, compare Proposition \ref{prop:subconstituent} in the present paper to Proposition 5.4 in \cite{CVZZ}, with the appropriate change of notations), or more directly, all these parameters or relations are determined by the eigenvalues of the scheme.  

Let us finally highlight that the limit $q\to1$ of $\A$ with all parameters $n,\ell,m$ fixed ($\ell$ is independent of the parameters $r$ and $h$) 
corresponds to the limit $r\to2$ of $J_r(n,m)$, which is the binary Johnson scheme.

\section{Outlook} \label{sec:outlook}

This paper examined the bivariate $P$- and $Q$-polynomial structures of association schemes based on attenuated spaces. Although these structures have been characterized in recent works \cite{BCPVZ,BKZZ,BKZZ2}, the present paper features an original approach which is based on the determination of the recurrence relations and difference equations of bivariate polynomials from the properties of the constituant univariate polynomials. The intersection numbers and Krein parameters of the schemes were then identified 
from the coefficients of the bispectral equations of the eigenvalues, without need of combinatorial arguments. 
The bivariate $P$- and $Q$-polynomial structure of the scheme was further shown to satisfy a more refined structure, $\poo$-compatible for certain $\alpha, \beta$. As shown in Section \ref{sec:bivariatePQpoly}, a lot of information is contained in this compatibility, in particular the degree of recurrence or difference relations satisfied by the scheme, or by the associated bivariate polynomials. 
The approach put forward in this paper naturally led to the exploration of the bispectral algebra associated to the bivariate polynomials which form the eigenvalues. Algebraic relations were obtained as well for the subconstituent (or Terwilliger) algebra of the association scheme. Finally, the connection between the Johnson schemes and the schemes based on attenuated spaces was made explicit, and the properties of both were compared.     

Other higher rank association schemes whose eigenvalues can be expressed in terms of univariate polynomials could benefit from the methods presented in this paper, which focus on the properties of polynomials rather than specific combinatorial aspects. Among these are those based on isotropic subspaces, which are related to the univariate $q$-dual Hahn polynomial.

Concerning the algebraic aspects, the example of association schemes based on attenuated spaces studied in the present paper (together with its limit, the non-binary Johnson schemes studied in \cite{CVZZ}) provides grounds for further exploration of the higher rank algebras associated to multivariate orthogonal polynomials. It would thus be valuable to get a complete description of the bispectral algebra investigated in Subsection \ref{ssec:bispalg}, or of the subconstituent algebra in Section \ref{sec:subconstituent}. 
For the latter, an additional relation between $A_{10}$ and $A_{01}^*$ could exist, generalizing the tridiagonal relations derived in Proposition \ref{prop:subconstituent}. These tridiagonal relations are central in univariate $P$- and $Q$-polynomial schemes, and identifying a suitable generalization is crucial for advancing our understanding of the subconstituent algebra of multivariate $P$- and $Q$-polynomial association schemes. 

An intermediate step towards these goals would be to focus on a subclass of association schemes with properties similar to those based on attenuated spaces. In terms of $\poo$-compatible bivariate $P$- and $Q$-polynomial association schemes, this corresponds to the subclass with either $\alpha=0$ and $\beta=1$ or $\alpha=1$ and $\beta=0$, so that the pair of recurrence relations satisfied by the eigenvalues (or dual eigenvalues) have respectively three and nine terms. In fact, this subclass of schemes could be restrained further so that the latter recurrence relation contains at most seven terms, as is actually the case for the schemes based on attenuated spaces. On triangular domains $\cD$, such a subclass would correspond to a refinement of the concept of $A_2$ bivariate $P$- and $Q$-polynomial association scheme \cite{BKZZ2}. It would be interesting then to determine if such schemes lead to factorized $A_2$-Leonard pairs \cite{CZ}, in the same way that $A_M$ multivariate $P$- and $Q$-polynomial association schemes lead more generally to $A_M$-Leonard pairs \cite{IT,BKZZ2}.

\vspace{1cm}
\noindent \textbf{Acknowledgements.}
PAB and MZ hold an Alexander-Graham-Bell scholarship from the Natural Sciences and Engineering Research Council (NSERC) of Canada.
NC thanks the CRM for its hospitality and is supported by the international research project AAPT of the CNRS. 
The research of LV is supported in part by a Discovery Grant from the NSERC. 

\appendix

\section{Standard notations}\label{app:notations}

We record here for reference the following definitions.

\subsection{Basic hypergeometric functions}
The $q$-Pochhammer symbol:
\begin{equation}
(a;q)_0:=1\,, \qquad (a;q)_n=\prod_{k=0}^{n-1}(1-aq^k)\,, \quad n = 1,2,\dots 
\end{equation}
The $q$-binomial coefficient:
\begin{equation}
\qbinom{n}{k} = \frac{(q;q)_n}{(q;q)_k(q;q)_{n-k}} \,.
\end{equation}
The basic hypergeometric function:
\begin{equation}
{}_r\phi_s \Biggl({{a_1,\; \dots, \;a_r}\atop
{b_1,\; \dots, \; b_s}}\;\Bigg\vert \; q,z\Biggr) = \sum_{k=0}^\infty \frac{(a_1;q)_k \cdots (a_r;q)_k}{(b_1;q)_k \cdots (b_s;q)_k}(-1)^{(1+s-r)k}q^{(1+s-r)\binom{k}{2}}\frac{z^k}{(q;q)_k}\,.
\end{equation}
The affine $q$-Krawtchouk polynomials:
\begin{equation}\label{eq:affqK}
K_n^{aff}(q^{-x};p,N;q)=    {}_3\phi_2 \Biggl({{q^{-n},\; 0,\;q^{-x}}\atop
{pq,\; q^{-N}}}\;\Bigg\vert \; q,q\Biggr) \,, \quad n=0,1,\dots,N \,.
\end{equation}
The dual $q$-Hahn polynomials:
\begin{equation}\label{eq:dH}
R_n(\mu(x);\gamma,\delta,N|q)=    {}_3\phi_2 \Biggl({{q^{-n},\;\gamma\delta q^{x+1}}, \; q^{-x}\atop
{\gamma q,\; q^{-N}}}\;\Bigg\vert \; q,q\Biggr) \,, \quad n=0,1,\dots,N \, ,
\end{equation}
where $\mu(x)=q^{-x}+\gamma\delta q^{x+1}$.

\subsection{Hypergeometric functions}
The Pochhammer symbol:
\begin{equation}
(a)_0:=1\,, \qquad (a)_n=\prod_{k=0}^{n-1}(a+k)\,, \quad n = 1,2,\dots 
\end{equation}
The hypergeometric function:
\begin{equation}
{}_rF_s \Biggl({{a_1,\; \dots, \;a_r}\atop
{b_1,\; \dots, \; b_s}}\;\Bigg\vert \; z\Biggr) = \sum_{k=0}^\infty \frac{(a_1)_k \cdots (a_r)_k}{(b_1)_k \cdots (b_s)_k}\frac{z^k}{k!}\,.
\end{equation}
The Krawtchouk polynomials:
\begin{align}
& \hat{K}_n(x;p,N)= {_2}F_1\Biggl({{-n,-x} \atop {-N}}
\;\Bigg|\; \frac{1}{p} \Biggr)\,, \qquad n=0,1,\dots,N\,.
\end{align} 
The dual-Hahn polynomials:
\begin{align}
& R_n(\lambda(x);\gamma,\delta,N)={_3}F_2\Biggl({{-n,-x,x+\gamma+\delta+1}\atop {\gamma+1,-N}} \; \Bigg| \; 1 \Biggl)\,, \qquad n=0,1,\dots,N\,,
\end{align} 
where $\lambda(x)=x(x+\gamma+\delta+1)$.

\section{Relations for the univariate polynomials} \label{app:relations} 

The recurrence relations and difference equations of the univariate polynomials $K_j(N,\ell;q;s)$ and $E_{i}(N,m;q;r)$ (see \textit{e.g.}\ \cite{Koek}) are recorded below.
We provide also the contiguity recurrence relations for $K_j(N,\ell;q;s)$ and the contiguity difference equations for $E_{i}(N,m;q;r)$. These relations can be proven by induction on the degree or the variable of the polynomials (see \cite{CFR,CZ}). 

\paragraph{(Contiguity) recurrence relation for $K_j(N,\ell;q;s)$.}

For $\epsilon\in\{0,+1,-1\}$, $0 \leq j \leq \min(N,\ell)$ and $0 \leq s \leq \min(N,N+\epsilon,\ell)$
one gets
\begin{align} \label{eq:crecK}
 \lambda^\epsilon(s) K_j(N,\ell;q;s)=& \sum_{\epsilon'\in \{0,+1,-1\} }  a_j^{\epsilon,\epsilon'} K_{j+\epsilon'}(N+\epsilon,\ell;q;s) \,,
\end{align}
where 
\begin{align}
& \lambda^+(s)=(q^{-s}-q^{-N-1})q^{\ell+1}\,,&&  a_j^{++}=(q^{j+1}-1)q^{-N}   \,, \quad a_j^{+-}=0\,,\nonumber \\
&&& a_j^{+0}=q-q^{j-N}\,, \\
 &\lambda^0(s)=q^{-s}-1\,, &&a_j^{0+}=(q^{j+1}-1)q^{j-N-\ell}\,,\quad  a_j^{0-}=(1-q^{j-N-1})(1-q^{j-\ell-1})\,,\nonumber \\
 &&& a_j^{00}=-a_{j-1}^{0+}-a_{j+1}^{0-} \,, \\
 & \lambda^-(s)=1\,,&& a_j^{-+}=0\,,\quad  a_j^{--}= q^\ell-q^{j-1}\,, \nonumber\\
&&& a_j^{-0}=q^j \,.
\end{align}

\paragraph{Difference equation for $K_j(N,\ell;q;s)$.}
For $ 0 \leq j,s\leq \min(N,\ell)$, 
the following difference relation holds:
\begin{align} \label{eq:diffK}
 \theta_j K_j(N,\ell;q;s)=& \sum_{\epsilon \in \{0,+1,-1\} }  b^{\epsilon}(s) K_{j}(N,\ell;q;s+\epsilon) \,,
\end{align}
where 
\begin{align}
& \theta_j=q^{-j}-1 \,,&& b^{+}(s)=(1-q^{s-N})(1-q^{s-\ell})   \,, \quad b^{-}(s)= -q^{s-\ell-N-1}(1-q^s)  \,,\nonumber\\
&&&b^{0}(s)=- b^{+}(s)- b^{-}(s) \,.
\end{align}

\paragraph{Recurrence relation for $E_i(N,m;q;r)$.}

For $ 0 \leq i,r \leq \min(N-m,m)$, 
the polynomial $E_i(N,m;q;r)$ satisfies the following recurrence relation:
\begin{align}\label{eq:recE}
 \Lambda(r) E_i(N,m;q;r)=& \sum_{\epsilon\in \{0,+1,-1\} }  A_i^{\epsilon}\ E_{i+\epsilon}(N,m;q;r) \, ,
\end{align}
where 
\begin{align}
& \Lambda(r)=(q^{-r}-1)(1-q^{r-N-1})\,,&&  A_i^{+}=(1-q^{i+1})^2q^{-N-1}   \,, \quad A_i^{-}=(1-q^{i-1-N+m})(1-q^{i-m-1})\,,
 \nonumber\\
&&& A_i^{0}=-A_{i-1}^{+}-A_{i+1}^{-}\,.
\end{align}

\paragraph{(Contiguity) difference equations for $E_i(N,m;q;r)$.}

For $\epsilon\in\{0,+1,-1\}$ and $ 0 \leq i,r \leq \min(N-m,m)$, 
one gets
\begin{align}\label{eq:cdiffE}
 \Theta^\epsilon_i E_i(N,m;q;r)=& \sum_{\epsilon' \in \{0,+1,-1\} }  B^{\epsilon,\epsilon'}(r) E_i(N+\epsilon,m+\epsilon;q;r+\epsilon')\,,
\end{align}
where 
\allowdisplaybreaks
\begin{align}
&  \Theta^+_i=1  \,,&& B^{++}(r)=\frac{1-q^{r+m-N}}{1-q^{2r-N-1}}
   \,, \quad B^{+-}(r)= 0 \,,\nonumber \\
&&& B^{+0}(r)=-\frac{q^{2r-N-1}(1-q^{m+1-r})
}{1-q^{2r-N-1}} \,,\\
&  \Theta^0_i=q^{-i}-1  \,,&& B^{0+}(r)=\frac{(1-q^{r-N+m})(1-q^{r-N-1})(1-q^{r-m})}{(1-q^{2r-N-1})(1-q^{2r-N})}\,,\nonumber\\
 &&&  B^{0-}(r)= - q^{r-N-1}\, \frac{(1-q^r)(1-q^{r-m-1})(1-q^{r-N+m-1})}{(1-q^{2r-N-1})(1-q^{2r-N-2})}  \,,\nonumber \\
&&& B^{00}(r)=-B^{0+}(r)-B^{0-}(r) \,, \label{eq:diffE}\\
&  \Theta^-_i=q^{m-i} -1 \,,&& B^{-+}(r)=0   \,, \quad 
B^{--}(r)= -\frac{(1-q^{r-N+m-1})(1-q^r)}{1-q^{2r-N-1}}\,,\nonumber \\
&&& B^{-0}(r)=\frac{(1-q^{r-N-1})(q^m-q^r)}{1-q^{2r-N-1}} \,.
\end{align}

\end{document}